
\documentclass{amsart}


\usepackage[pagewise]{lineno}


\usepackage[english,spanish,portuguese]{babel}

\newenvironment{poliabstract}[1]
 {\begin{abstract}}
 {\end{abstract}}


\usepackage{amsthm}
\usepackage{amssymb}
\usepackage{amsmath}
\usepackage{graphicx}
\usepackage{amscd}
\usepackage{amsfonts}
\usepackage{amsbsy}
\usepackage[T1]{fontenc}
\usepackage[english]{babel}

\usepackage{epsfig}
\usepackage{amssymb}

\textwidth=13truecm

\newtheorem{thm}{Theorem}

\newtheorem{prop}{Proposition}
\newtheorem{definition}{Definition}
\newtheorem{lemma}{Lemma}

\makeatletter
\@namedef{subjclassname@2020}{%
  \textup{2020} Mathematics Subject Classification}
\makeatother

\newtheorem{ex}{Example}

\title[Sufficient conditions for rescaling expansivity]{Sufficient conditions for rescaling expansivity}

\author{A. Rojas}

\address{Instituto de Matem\'atica, Universidade Federal do Rio de Janeiro, P. O.
Box 68530, 21945-970 Rio de Janeiro, Brazil.}
\email{tchivatze@gmail.com}

\author{X. Wen}
\address{LMIB, Institute of Artificial Intelligence and School of Mathematical Science, Beihang University, Beijing, China.}
\email{wenxiao@buaa.edu.cn}

\author{Y. Yang}
\address{Department of Mathematics, Liaoming University, China.}
\email{yynmath@163.com}

\thanks{AR was partially supported by Basic Science Research Program through the NRF funded by the Ministry of Education (Grant Number: 2022R1l1A3053628)
and CNPq-Brazil grant No 307776/2019-0.
XW was partially supported by NSFC12071018 and the Fundamental Research Funds for the Central Universities.
YY was partially supported by the National Natural Science Foundation of China (Grant No. 12101281).}

\keywords{Singular-expansive, k*-expansive, Rescaling expansive}

\subjclass[2020]{Primary  37C10; Secondary 37B05}

\begin{document}

\selectlanguage{english}
\begin{poliabstract}{Abstract} 
We demonstrate that any k*-expansive vector field on a closed manifold exhibits rescaling expansiveness. This enhances the principal outcome outlined in \cite{a}. The verification of this assertion hinges on the introduction and exploration of the novel concept termed as a "singular-expansive flow," which will be thoroughly examined.
\end{poliabstract}


\maketitle

\section{Introduction}
\noindent
The concept of expansive flows, introduced by Bowen and Walters \cite{bw} as an extension of the notion for homeomorphisms \cite{u}, forms the basis of a vast theory. However, this theory excludes significant examples such as the geometric Lorenz attractor \cite{abs, g} and the Cherry flow \cite{pm}. Various attempts have been made by different authors to incorporate these examples into the expansive theory.
For instance, Komuro \cite{k} defined {\em k*-expansive flows} and established the k*-expansiveness of the geometric Lorenz attractor. Oka \cite{o} demonstrated the equivalence between k*-expansivity and expansivity for nonsingular flows. Araujo et al. \cite{appv} extended Komuro's work by proving the k*-expansiveness of every singular-hyperbolic attractor in three-dimensional differentiable flows. Several authors have also revealed intriguing properties of k*-expansive flows, as evident in \cite{a0, a1, brv}.
More recently, Wen and Wen \cite{ww} introduced {\em rescaling expansive flows} and demonstrated the rescaling expansiveness of multisingular hyperbolic flows \cite{bl}. Artigue \cite{a} contributed by establishing a sufficient condition, termed {\em efficiency}, for a k*-expansive flow to be rescaling expansive. Notably, this condition holds true, for instance, when the fixed points of the flow are hyperbolic. Consequently, it was concluded that $C^1$ generic k*-expansive vector fields on closed manifolds are rescaling expansive.

In this paper, we advance Artigue's work \cite{a} by proving that every k*-expansive vector field on a closed manifold exhibits rescaling expansiveness. This proof hinges on a novel notion of expansivity for flows termed {\em singular-expansivity}. Our analysis delves into the dynamics of singular-expansive flows on metric spaces. Specifically, we establish that the set of periodic orbits in a singular-expansive flow is countable. Moreover, if the singular set is dynamically isolated, the set of periodic orbits with a specified period is finite.
We also demonstrate the existence of singular-expansive flows possessing the shadowing property but lacking expansiveness. Additionally, we explore the notion of {\em singular-equicontinuous flows}, extending the classical concept of equicontinuous flows \cite{au}. Notably, we prove the existence of flows that are singular-expansive and singular-equicontinuous yet not equicontinuous. Furthermore, we establish that the Bowen entropy of nonsingular points vanishes for singular-equicontinuous flows.
Let us now precisely state our main result.

Denote by $M$ a {\em closed manifold} i.e. a compact connected boundaryless manifold
endowed with a Riemannian metric $\|\cdot\|$.
In this section we let $d$ denote the distance in $M$ induced by $\|\cdot\|$.
Denote by $B_a(x)$ the $a$-ball with center at $x$. The exponential map
of $M$ will be denoted by $exp$.

Let $V$ be a vector field of $M$ (all vector fields will be assumed to be $C^1$).
Denote by $V_t(x)$ the solution curve of the ODE $\dot{x}=V(x)$ with initial condition $x\in M$.
This produces a one-parameter family of diffeomorphisms $\{V_t:M\to M\}_{t\in\mathbb{R}}$.
We say that $\Lambda\subset M$ is {\em invariant} if $V_t(\Lambda)=\Lambda$ for every $t\in\mathbb{R}$.

\begin{definition}[\cite{ww}]
We say that $V$ is {\em rescaling expansive} on $\Lambda\subset M$ if for every $\epsilon>0$ there is $\delta>0$ such that if $x,y\in \Lambda$ satisfy
$$
d(V_t(x),V_{s(t)}(y))\leq \delta \|V(V_t(x))\|
$$
for every $t\in\mathbb{R}$ and some increasing homeomorphism $s:\mathbb{R}\to \mathbb{R}$, then $V_{s(0)}(y)\in V_{[-\epsilon,\epsilon]}(x)$.
If $V$ is rescaling expansive on $M$, we just say that $V$ is a {\em rescaling expansive vector field}.
\end{definition}

Actually this is not the original definition \cite{ww} but an equivalent one \cite{wy}.
Next we recall the notion of k*-expansive vector field
based on Komuro \cite{k}.

\begin{definition}
We say that $V$ is
{\em k*-expansive} on $\Lambda$ if
for every $\epsilon>0$ there is $\delta>0$ such that if $x,y\in \Lambda$ satisfy 
$
d(V_t(x),V_{s(t)}(y))\leq \delta
$
for every $t\in\mathbb{R}$ and some increasing homeomorphism $s: \mathbb{R}\to \mathbb{R}$ fixing $0$, then $V_{s(t_0)}(y)\in V_{[t_0-\epsilon,t_0+\epsilon]}(x)$ for some $t_0\in \mathbb{R}$.
If $V$ is k*-expansive on $X$, we just say that $V$ is k*-expansive.
\end{definition}

With these definitions we can state our main result.

\begin{thm}
\label{big}
Let $V$ be a vector field of a closed manifold $M$.
If $V$ is k*-expansive on a compact invariant set
$\Lambda\subset M$, then $V$ is rescaling expansive on $\Lambda$.
\end{thm}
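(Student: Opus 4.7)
The plan is a proof by contradiction using the compactness of $\Lambda$, together with the novel notion of singular-expansive flow developed earlier in the paper. Suppose $V$ is k*-expansive on $\Lambda$ but not rescaling expansive on $\Lambda$. Then for some $\epsilon_0>0$ there exist sequences $\delta_n\searrow 0$, points $x_n,y_n\in\Lambda$, and increasing homeomorphisms $s_n\colon\mathbb{R}\to\mathbb{R}$ satisfying
\[
d(V_t(x_n), V_{s_n(t)}(y_n)) \leq \delta_n\,\|V(V_t(x_n))\| \quad \text{for every } t\in\mathbb{R},
\]
while $V_{s_n(0)}(y_n)\notin V_{[-\epsilon_0,\epsilon_0]}(x_n)$. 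Replacing $y_n$ by $V_{s_n(0)}(y_n)$ and $s_n$ by $s_n(\cdot)-s_n(0)$, we may assume $s_n(0)=0$, so the failure becomes $y_n\notin V_{[-\epsilon_0,\epsilon_0]}(x_n)$. By compactness of $\Lambda$ we pass to a subsequence with $x_n\to x$ and $y_n\to y$.

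The argument splits according to the distance from the orbit of $x_n$ to the singular set $\mathrm{Sing}(V)=\{p:V(p)=0\}$. If $\inf_{t,n}\|V(V_t(x_n))\|>0$, then $\|V\|$ is bounded above by some constant $C$, and the rescaled hypothesis degenerates to the absolute bound $d(V_t(x_n),V_{s_n(t)}(y_n))\leq C\delta_n\to 0$. This is exactly the hypothesis of k*-expansivity, so it yields $V_{s_n(\tau_n)}(y_n)\in V_{[\tau_n-\epsilon_0/2,\,\tau_n+\epsilon_0/2]}(x_n)$ for some $\tau_n$; in particular $y_n=V_{c_n}(x_n)$ for some $c_n\in\mathbb{R}$. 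Evaluating the original rescaled estimate at $t=0$ then gives $d(x_n,V_{c_n}(x_n))\leq\delta_n\|V(x_n)\|$, and a flow box estimate at the regular point $x_n$ forces $|c_n|\to 0$, contradicting $y_n\notin V_{[-\epsilon_0,\epsilon_0]}(x_n)$ for large $n$.

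The delicate case is when segments of $\{V_t(x_n)\}$ spend arbitrarily long times near $\mathrm{Sing}(V)$, so $\|V\|$ is not uniformly bounded below along the orbit. Here the k*-expansive bound $\delta_n\sup\|V\|$ gives no local information at singularities, whereas the rescaled bound $\delta_n\|V(V_t(x_n))\|$ becomes even smaller precisely where it is most needed. Singular-expansivity is tailored to this regime: it governs orbit pairs that shadow each other in the rescaled sense near the singular set, and will supply the missing uniform control of $s_n$ needed to reduce to the flow box argument of the previous paragraph, or else identify the limit pair $(x,y)$ on a common orbit. The main obstacle is precisely the control of the limit of the reparameterizations $s_n$: because the flow speeds of $V_t(x_n)$ and $V_{s_n(t)}(y_n)$ may differ by an unbounded ratio near singular accumulation points, $(s_n)$ can exhibit severe local distortion, and extracting a limit reparameterization consistent with the rescaled bound at the limiting singular trajectory is where the structural results on singular-expansive flows developed earlier in the paper do the heavy lifting.
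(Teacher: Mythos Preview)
Your proposal has a genuine gap: Case~2 is not proven. You correctly identify the obstacle---controlling the reparameterizations $s_n$ when the orbits of $x_n$ accumulate on $\mathrm{Sing}(V)$---but then simply assert that ``the structural results on singular-expansive flows developed earlier in the paper do the heavy lifting,'' without invoking any specific result or deriving any contradiction. No limit reparameterization is extracted, no lemma is applied, nothing is concluded. As written, Case~2 is a description of where the difficulty lies, not a resolution of it. (Case~1 is also looser than it appears: from $y_n=V_{c_n}(x_n)$ and $d(x_n,V_{c_n}(x_n))\leq\delta_n\|V(x_n)\|$ you cannot invoke a flow-box estimate to force $|c_n|$ small without first knowing $|c_n|<r_0$; establishing that a~priori bound and then propagating it is exactly the content of the paper's Lemmas~\ref{ininterval}--\ref{lem6}, which you have not reproduced.)

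The paper's proof avoids all of this. It is direct---no contradiction, no compactness, no case split on proximity to $\mathrm{Sing}(V)$, no limits of $s_n$. The two steps are: (i) k*-expansive $\Rightarrow$ singular-expansive, which is trivial since $\mathrm{dist}(z,\mathrm{Sing}(V))\leq\mathrm{diam}(M)$, so the singular-expansive hypothesis is weaker than the k*-expansive one; and (ii) singular-expansive $\Rightarrow$ rescaling expansive, whose key observation is the Lipschitz inequality $\|V(z)\|\leq B\,\mathrm{dist}(z,\mathrm{Sing}(V))$ (valid because $V$ is $C^1$ and vanishes on $\mathrm{Sing}(V)$). This single inequality converts the rescaled hypothesis $d(V_t(x),V_{s(t)}(y))\leq\delta\|V(V_t(x))\|$ into the singular-expansive hypothesis uniformly in $t$, with no need to separate the near-singular regime. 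Singular-expansivity then yields $V_{s(t_0)}(y)\in V_{[t_0-\epsilon,t_0+\epsilon]}(x)$ for \emph{some} $t_0$, and the flow-box Lemmas~\ref{ininterval}--\ref{lem6} (local control of $|s(t)-s(0)-t|$, then induction along the orbit in steps of length $r_0/2$) upgrade this to $t=0$. The ``heavy lifting'' is thus purely quantitative flow-box analysis, not any limiting or structural property of singular-expansive flows.
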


The proof of this theorem is based on the following
definition. Denote by $Sing(V)=\{x\in X:V(x)=0\}$ the singular set of a vector field $V$.
The distance between $z\in M$ and $A\subset M$
is defined by $dist(z,A)=\inf_{a\in A}d(z,a)$.

\begin{definition}
\label{sing-exp}
We say that $V$ is {\em singular-expansive on $\Lambda\subset M$}
if for every $\epsilon>0$ there is $\delta>0$ such that if
$x,y\in \Lambda$ satisfy
$
d(V_t(x),V_{s(t)}(y))\leq \delta dist(V_t(x),Sing(V))
$
for every $t\in\mathbb{R}$ and some increasing homeomorphism $s: \mathbb{R}\to \mathbb{R}$, then $V_{s(t_0)}(y)\in V_{[t_0-\epsilon,t_0+\epsilon]}(x)$ for some $t_0\in\mathbb{R}$.
If $V$ is singular-expansive on $X$, we just say that $V$ is a {\em singular-expansive flow}.
\end{definition}

Theorem \ref{big} is clearly a direct consequence of the following two propositions.

\begin{prop}
\label{p1}
Let $V$ be a vector field of a closed manifold $M$.
If $V$ is k*-expansive on
$\Lambda\subset M$, then $V$ is singular-expansive on $\Lambda$.
\end{prop}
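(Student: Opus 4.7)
The plan is to leverage compactness of $M$ and invariance of $\Lambda$ to show that the singular-expansive hypothesis reduces directly to the k*-expansive one, modulo a harmless time reparametrization. Since $M$ is compact, the function $dist(\cdot, Sing(V))$ is uniformly bounded above by $D := \mathrm{diam}(M) < \infty$ whenever $Sing(V) \neq \emptyset$ (the opposite case can be absorbed by any positive convention for $dist(\cdot, \emptyset)$, under which the problem trivially reduces to k*-expansiveness). Given $\epsilon > 0$, I would invoke k*-expansiveness of $V$ on $\Lambda$ to obtain $\delta_0 > 0$ as in that definition, and set $\delta := \delta_0 / D$ as the constant witnessing singular-expansiveness at this $\epsilon$.

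For the verification, suppose $x, y \in \Lambda$ and $s \colon \R \to \R$ is an increasing homeomorphism with $d(V_t(x), V_{s(t)}(y)) \leq \delta \cdot dist(V_t(x), Sing(V))$ for every $t \in \R$. The uniform bound immediately yields $d(V_t(x), V_{s(t)}(y)) \leq \delta_0$ for every $t$. The one remaining discrepancy with the k*-expansive framework is that $s$ is not required to fix $0$; this I would fix by a time translation. Set $y' := V_{s(0)}(y)$, which belongs to $\Lambda$ by invariance, and define $\sigma(t) := s(t) - s(0)$, an increasing homeomorphism fixing $0$ with $V_{\sigma(t)}(y') = V_{s(t)}(y)$ for every $t$. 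The pair $(x, y')$ then satisfies the k*-expansive hypothesis under the reparametrization $\sigma$, so k*-expansiveness produces some $t_0 \in \R$ with $V_{\sigma(t_0)}(y') \in V_{[t_0 - \epsilon, t_0 + \epsilon]}(x)$; unwinding the translation gives the singular-expansive conclusion $V_{s(t_0)}(y) \in V_{[t_0 - \epsilon, t_0 + \epsilon]}(x)$, as required.

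There is no substantial obstacle here: the whole argument is an unwinding of the definitions resting on two modest ingredients, compactness of $M$ (to bound $dist(\cdot, Sing(V))$) and invariance of $\Lambda$ (to keep the translated basepoint $y'$ inside $\Lambda$). The only minor care is the convention for $dist(\cdot, \emptyset)$ in the nonsingular case, which is immaterial to the conclusion since in that situation the singular correction is absent and the hypothesis collapses to ordinary k*-expansiveness.
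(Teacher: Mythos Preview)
Your argument is correct and matches the paper's approach: the paper proves the statement for general flows on compact metric spaces (Theorem~\ref{alcu}) via exactly the same diameter bound $dist(\cdot,Sing(\phi))\le diam(X)$, and then specializes to the flow of $V$. You are in fact more explicit than the paper about the mismatch between the two definitions (k*-expansivity requires $s(0)=0$, singular-expansivity does not), repairing it by passing to $y'=V_{s(0)}(y)$; note that this step uses invariance of $\Lambda$, which is not hypothesized in the proposition itself but is present in Theorem~\ref{big} where the proposition is applied, and is tacitly needed by the paper's own argument as well.
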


\begin{prop}
\label{thA}
Let $V$ be a vector field of a closed manifold $M$.
If $V$ is singular-expansive on a compact invariant set
$\Lambda\subset M$, then $V$ is rescaling expansive on $\Lambda$.
\end{prop}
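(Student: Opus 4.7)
The strategy is to use the Lipschitz bound on $V$ to turn the rescaling-expansive hypothesis into the singular-expansive hypothesis (thereby forcing $y$ onto the orbit of $x$), and then to exploit a first-order flow estimate together with a continuity bootstrap to pin down the time-shift at $t=0$.

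\textbf{First step (reduction to $y$ lying on the orbit of $x$).} Because $V$ is $C^1$ on the compact manifold $M$ and vanishes on $Sing(V)$, its Lipschitz constant $L$ yields $\|V(z)\|\le L\,dist(z,Sing(V))$ for every $z\in M$. Therefore every triple $(x,y,s)$ satisfying the rescaling-expansive hypothesis with constant $\delta$ also satisfies the singular-expansive hypothesis with constant $\delta L$. Given $\epsilon>0$, set $K_0:=1/(2\mathrm{Lip}(V))$ and $\epsilon^*:=\min(\epsilon,K_0)$, and choose $\delta$ so that $\delta L$ is at most the singular-expansive constant for $\epsilon^*/2$. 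Then Definition \ref{sing-exp} provides $t_0\in\mathbb{R}$ and $|\sigma|\le\epsilon^*/2$ with $V_{s(t_0)}(y)=V_{t_0+\sigma}(x)$; in particular $y$ lies on the orbit of $x$. Write $y=V_c(x)$ with $c:=t_0+\sigma-s(t_0)$, and set $\phi(t):=s(t)+c$ and $\psi(t):=\phi(t)-t$. The hypothesis rewrites as $d(V_t(x),V_{t+\psi(t)}(x))\le\delta\,\|V(V_t(x))\|$ for all $t$, with $|\psi(t_0)|\le\epsilon^*/2<K_0$. If $x\in Sing(V)$ we are already done: $\|V(V_t(x))\|\equiv 0$ forces $V_{\phi(t)}(x)=x$, hence $V_{s(0)}(y)=x\in V_{[-\epsilon,\epsilon]}(x)$.

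\textbf{Second step (continuity bootstrap).} Assume $x\notin Sing(V)$, so that $V_t(x)\notin Sing(V)$ for every $t$ by invariance. A standard Gronwall comparison along orbits yields the uniform lower bound $d(y,V_\tau(y))\ge \tfrac12\,|\tau|\,\|V(y)\|$ for every $y\in M$ and every $|\tau|\le K_0$. Combining this with the hypothesis at time $t$: whenever $|\psi(t)|\le K_0$, we obtain $|\psi(t)|\le 2\delta$. Further shrink $\delta$ so that $2\delta<K_0$ and $2\delta\le\epsilon$. Since $\psi$ is continuous (as $s$ is an increasing homeomorphism), if $|\psi(t^*)|>K_0$ for some $t^*$, the intermediate value theorem would produce $t_1$ with $|\psi(t_1)|=K_0$, contradicting $K_0\le 2\delta<K_0$ at $t_1$. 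Hence $|\psi(t)|\le 2\delta\le\epsilon$ for every $t$; in particular $|\psi(0)|\le\epsilon$. For non-periodic orbits this immediately gives $V_{s(0)}(y)=V_{\psi(0)}(x)\in V_{[-\epsilon,\epsilon]}(x)$; for periodic orbits of period $p$, either $p\le 2\epsilon$ (the conclusion is trivial since $V_{[-\epsilon,\epsilon]}(x)$ is the whole orbit and $y$ lies on it), or $p>2\epsilon$ in which case $|\psi(0)|\le\epsilon<p/2$ keeps $\phi(0)$ in the $k=0$ band and the conclusion still follows.

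\textbf{Expected obstacle.} The main technical ingredient is the uniform first-order lower bound $d(y,V_\tau(y))\ge\tfrac12\,|\tau|\,\|V(y)\|$ with constants independent of $y\in M$, and in particular as $y$ approaches $Sing(V)$. The bound is homogeneous in $\|V(y)\|$ and therefore does not degenerate on $Sing(V)$, but its verification requires a careful Gronwall-type estimate controlling $V(V_s(y))-V(y)$ using only $\mathrm{Lip}(V)$ and the Riemannian structure of $M$. Once this estimate is in hand, the combination of singular-expansiveness (placing $y$ on the orbit of $x$) with the continuity bootstrap (controlling $\psi(0)$) closes the argument cleanly.
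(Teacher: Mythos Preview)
Your proof is correct and shares the paper's two-step skeleton: first use the Lipschitz bound $\|V(z)\|\le B\,dist(z,Sing(V))$ to feed the rescaling hypothesis into singular-expansivity, thereby placing $y$ on the orbit of $x$ at some $t_0$; then propagate the control from $t_0$ to $t=0$. The execution of the propagation step is where the two arguments diverge. The paper builds a chain of preparatory lemmas: Lemma~\ref{ininterval} bounds $|h(t)-h(0)-t|$ on $(-r_0,r_0)$ via detailed flowbox estimates \emph{without} assuming the two orbits coincide; Lemma~\ref{lem5} then combines this with the hypothesis $V_{h(0)}(y)\in V_{(-r_0,r_0)}(x)$ to obtain $V_{h(t)}(y)\in V_{(-3\delta,3\delta)}(V_t(x))$ on $(-r_0,r_0)$; and Lemma~\ref{lem6} extends this to all of $\mathbb{R}$ by induction in steps of length $r_0/2$. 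Your route is more direct: once $y=V_c(x)$ is known, the problem collapses to the single continuous real function $\psi(t)=s(t)+c-t$, and the first-order lower bound $d(z,V_\tau(z))\ge c_0\,|\tau|\,\|V(z)\|$ for $|\tau|\le K_0$ (this is essentially Lemma~\ref{art2}(2), which gives $c_0=\tfrac13$ rather than your $\tfrac12$) together with an open--closed continuity argument forces $|\psi|\le \delta/c_0$ on all of $\mathbb{R}$ in one stroke. Your version is shorter and bypasses the inductive extension entirely; the paper's route, in exchange, isolates Lemma~\ref{ininterval} as a reparametrization estimate that is valid even before one knows the two orbits coincide.
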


This paper is organized as follows.
In Section \ref{sec2} we will prove Proposition \ref{thA}. In Section \ref{sec3} we will extend the notion of singular-expansivity from vector fields on closed manifolds to flows on metric spaces and
prove Proposition \ref{p1}.
In Section \ref{secc3} we state some topological properties of the singular-expansive flows.
These properties will be proved in Section \ref{sec4}.

The first author thanks Beihang University, Beijing, Peoples Republic of China, for its kindly hospitality during the preparation of this work.

\section{Proof of Proposition \ref{thA}}
\label{sec2}

\noindent
We first recall a flowbox theorem given in \cite{ww}. Let $V$ be a $C^1$ vector field on a closed manifold $M$. Let $x\in M\setminus Sing(V)$. Let
$$N_x(r)=\{v\in T_x M: v\bot X(x), \|v\|<r\},$$
$$ U_x(r)=\{v+tX(x): v\in N_x(r\|V(x)\|), t\in\mathbb{R}, |t|<r\}.$$
Denote by
$$F_x: U_x\to M, \ \ \ \ F_x(v+tX(x))=\varphi_t(\exp_x(v)).$$
The {\em conorm} (or mininorm) of a linear operator $L$ is defined by
$$
m(L)=\inf_{\|v\|=1}\|L(v)\|.
$$
\begin{lemma}[Proposition 2.2 in \cite{ww}]\label{flowbox}
Let $V$ be a $C^1$ vector field on $M$. There exists a constants $r_0>0$ such that for any $x\in M\setminus Sing(V)$, $F_x:U_x(r_0)\to M$ is an embedding and $\|D_pF_x\|<3$ and $m(D_pF_x)>1/3$ for any $p\in U_x(r_0)$.
\end{lemma}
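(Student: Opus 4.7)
The plan is to show that on $U_x(r_0)$ the derivative $D_p F_x$ is uniformly close to the natural identification between tangent spaces when $r_0$ is small, and then deduce injectivity (hence the embedding property) by the standard quantitative inverse function argument. The whole difficulty is making these estimates uniform in $x \in M\setminus Sing(V)$, despite the fact that $U_x(r_0)$ is rescaled by $\|V(x)\|$ and shrinks to a point as $x$ approaches the singular set.

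First I would write down the derivative. For $p = v + tX(x)$ and a generic tangent vector $\xi = w + \alpha X(x) \in T_xM$ with $w \in N_x$, a direct computation gives
\[
D_pF_x(\xi) \;=\; D\varphi_t\big|_{\exp_x(v)}\cdot D\exp_x\big|_v(w) \;+\; \alpha\, V\big(\varphi_t(\exp_x(v))\big).
\]
At $p=0$ this is exactly $\xi$, so $D_0 F_x$ is the identity and both the norm and conorm equal $1$.

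Next I would localize the image. Since $\|v\|<r_0\|V(x)\|$, the point $\exp_x(v)$ lies at distance at most $(1+o_{r_0}(1))\,r_0\|V(x)\|$ from $x$. Using the global Lipschitz constant $L := \|DV\|_\infty$ (finite by compactness of $M$), $\|V\|$ on a small neighborhood of $x$ is bounded by $(1+Lr_0)\|V(x)\|$, so integrating the flow for time $|t|<r_0$ keeps $\varphi_t(\exp_x(v))$ inside a ball of radius $Cr_0\|V(x)\|$ around $x$, for some universal constant $C$. Inside this ball, $\|V(\cdot)-V(x)\|\leq LCr_0\|V(x)\|$, $\|D\exp_x|_v - I\|$ is arbitrarily small (uniformly in $x$, since $\|v\|$ is uniformly small with $r_0$), and $\|D\varphi_t|_{\exp_x(v)} - P_{0,t}\|=O(r_0)$, where $P_{0,t}$ is parallel transport along the trajectory, by a Gronwall estimate on the variational equation. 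Combining these three ingredients gives $\|D_pF_x(\xi)\| = (1+O(r_0))\|\xi\|$ and $m(D_pF_x)\geq 1-O(r_0)$ uniformly in $x$; choosing $r_0$ small enough then yields $\|D_pF_x\|<3$ and $m(D_pF_x)>1/3$.

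For injectivity, $U_x(r_0)$ is convex, so the mean value identity
\[
F_x(p_1)-F_x(p_2) \;=\; \int_0^1 D_{p_2+s(p_1-p_2)}F_x\,ds\cdot(p_1-p_2)
\]
combined with the lower bound on the conorm of the integrated derivative forces $p_1 = p_2$, producing a $C^1$ embedding. The main obstacle is the derivative estimate: one must carefully identify $T_{F_x(p)}M$ with $T_xM$ (through normal coordinates, or via parallel transport along the flowline) so that ``$D_pF_x$ close to the identity'' is geometrically meaningful, and prove that all $O(r_0)$ rates are uniform in $x$ despite $U_x(r_0)$ collapsing as $x\to Sing(V)$. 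The rescaling of the domain by $\|V(x)\|$ is precisely what keeps these Lipschitz-based estimates uniform, and would fail without it.
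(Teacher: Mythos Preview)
The paper does not prove this lemma at all: it is quoted verbatim as Proposition~2.2 of \cite{ww} and used as a black box. So there is no ``paper's own proof'' to compare your argument against; you are supplying what the authors deliberately outsourced.

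Your sketch is a sound outline of how the result is actually established in \cite{ww}. The key observation---that the rescaling of the normal disc by $\|V(x)\|$ is exactly what makes all the Lipschitz/Gronwall estimates uniform as $x\to Sing(V)$---is the essential point, and you have it. Two places would need tightening in a full write-up. First, the mean-value identity for $F_x(p_1)-F_x(p_2)$ only makes sense after composing with a chart (e.g.\ $\exp_x^{-1}$) so that the target is linear; you acknowledge this but it should be made explicit. Second, a uniform lower bound on $m(D_pF_x)$ alone does \emph{not} give a lower bound on the conorm of the averaged operator $\int_0^1 D_{p_2+s(p_1-p_2)}F_x\,ds$; what you actually need (and have already argued) is the stronger fact that each $D_pF_x$ is $O(r_0)$-close to the identity, so the average is too and is therefore invertible. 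With those two points cleaned up, the argument goes through.
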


Since $M$ is compact, there is $a>0$ such that
$$\|D_p\exp_x\|<3/2 \ \ \ \text{    and    }\ \ \ \ \|(D_p\exp_x)^{-1}\|<3/2$$
for all $x\in M$ and $p\in T_x M$ with $\|p\|<a$. Assume $V$ is a $C^1$ vector field on $M$. Also by the compactness of $M$, we can find $L>0$ such that for any $x\in M$, the vector field
$$\overline{V}=(\exp^{-1}_x)_*(V|_{B_a(x)})$$
is Lipschitz with Lipschitz constant $L$. We call $L$ the {\it local Lipschitz constant }of $V$. The following is an easy lemma.

\begin{lemma}\label{controlofnorm}
Let $V$ be a $C^1$ vector field on $M$. There is $c>0$ such that for any $x\in M\setminus Sing (V)$, if $d(y,x)<c\|V(x)\|$, then
$$\frac{1}{2}\|V(x)\|\leq \|V(y)\|\leq 2\|V(x)\|.$$
\end{lemma}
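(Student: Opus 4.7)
The plan is to derive the estimate from the $C^1$ regularity of $V$, encoded as the local Lipschitz constant $L$ introduced just before the statement, together with the compactness of $M$. Since $M$ is compact and $V$ is continuous, there is $K>0$ with $\|V(z)\|\le K$ for every $z\in M$. This uniform bound together with the chart radius $a$ will allow me to choose a single $c$ that works at every nonsingular point, which is exactly what the lemma requires.

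The key steps would be: first, fix $c>0$ small enough so that $cK<a$; then for every $x\in M\setminus Sing(V)$ and every $y$ with $d(y,x)<c\|V(x)\|$ we have $d(y,x)<a$, so $y$ lies inside the exponential chart $B_a(x)$ where the Lipschitz estimate for the pulled-back field $\overline{V}=(\exp_x^{-1})_*(V|_{B_a(x)})$ applies. Second, shrink $c$ further to absorb $L$ and the distortion factors $3/2$ coming from $\|D_p\exp_x^{\pm 1}\|<3/2$, so that the Lipschitz inequality $\|\overline{V}(\exp_x^{-1}(y))-\overline{V}(0)\|\le L\|\exp_x^{-1}(y)\|$, combined with $\|\exp_x^{-1}(y)\|\le \tfrac{3}{2}d(y,x)$ and with the $3/2$-bound used to compare $V(y)$ with its representative in the chart, yields
\[
\|V(y)-V(x)\|\le \tfrac{1}{2}\|V(x)\|.
\]
A triangle inequality then gives $\tfrac{1}{2}\|V(x)\|\le\|V(y)\|\le\tfrac{3}{2}\|V(x)\|\le 2\|V(x)\|$, which is the stated conclusion.

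The argument is essentially routine and I do not foresee a real obstacle. The one point that needs attention is that the final constant $c$ must be independent of the base point $x$, and in particular must work at points $x$ near $Sing(V)$ where $\|V(x)\|$ is tiny and the admissible radius $c\|V(x)\|$ shrinks accordingly. Uniformity is precisely what the global bounds $K$, $a$, $L$ and the derivative estimates $\|D_p\exp_x^{\pm 1}\|<3/2$, all supplied by compactness of $M$, give us; the smallness inequalities $cK<a$ and (schematically) $c\cdot\tfrac{3}{2}\cdot L\cdot\tfrac{3}{2}\le\tfrac{1}{2}$ are chosen once for all $x$, which is what makes the statement hold with a single constant $c$.
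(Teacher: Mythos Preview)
Your approach is essentially the paper's: use compactness to guarantee $c\|V(x)\|<a$, then the Lipschitz bound on $\overline V$ together with the $3/2$ bounds on $D\exp_x^{\pm1}$. One slip worth noting: the displayed inequality $\|V(y)-V(x)\|\le\tfrac12\|V(x)\|$ is not literally meaningful on a general Riemannian manifold, since $V(y)\in T_yM$ and $V(x)\in T_xM$ live in different tangent spaces, so your concluding triangle-inequality step does not parse as written (and even under a chart identification the constants would come out as $\tfrac13$ and $\tfrac94$, not $\tfrac12$ and $2$). The paper avoids this by staying in the chart first: with $c\le\tfrac{1}{4L}$ one obtains $\tfrac34\|V(x)\|\le\|\overline V(\exp_x^{-1}(y))\|\le\tfrac54\|V(x)\|$ via the triangle inequality in $T_xM$, and only then applies the $3/2$ derivative bound to pass to $\|V(y)\|$, landing exactly on the constants $\tfrac12$ and $2$.
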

\begin{proof}
Since there is an upper bound of $\|V(x)\|$ by the compactness of $M$, we can find $c>0$ such that $c\|V(x)\|<a$ for all $x\in M$ at first. Let $L$ be a local Lipschitz constant of $V$, we choose $c>0$ such that $c\|V(x)\|<a$ and $c\leq \frac{1}{4L}$. Then for any $x\in M\setminus Sing(V)$ and any $y\in M$ with $d(x,y)<c\|V(x)\|$, denoting by $\overline{V}=(\exp^{-1}_x)_*(V|_{B_a(x)})$, we have
$$
\|\overline{V}(\exp_x^{-1}(y))-\overline{V}(\exp_x^{-1}(x))\|\leq L d(y,x)\leq Lc\|V(x)\|\leq \frac{1}{4}\|V(x)\|.
$$
Note that $\overline{V}(\exp_x^{-1}(x))=V(x)$. Thus we have
$$\frac{3}{4}\|V(x)\|\leq\|\overline{V}(\exp_x^{-1}(y))\|\leq \frac{5}{4}\|V(x)\|.$$
Hence
$$
\|V(y)\|=\|(D_{exp_x^{-1}(y)}\exp_x)^{-1}(\overline{V}(\exp_x^{-1}(y)))\|\leq \frac{3}{2}\cdot\frac{5}{4}\|V(x)\|<2\|V(x)\|,
$$
$$
\|V(y)\|=\|(D_{exp_x^{-1}(y)}\exp_x)^{-1}(\overline{V}(\exp_x^{-1}(y)))\|\geq \frac{2}{3}\cdot\frac{3}{4}\|V(x)\|\geq\frac{1}{2}\|V(x)\|.
$$
This ends proof of the lemma.
\end{proof}

\begin{lemma}[Lemma 2.3 in \cite{ww}]
\label{art2}
For every $C^1$ vector field $V$ of a closed manifold $M$ there is $r_0>0$ such that
\begin{enumerate}
\item if $x\in M\setminus Sing(V)$, $0<\delta<r_0/3$
and $t\in\mathbb{R}$ satisfy
$V_{[0,t]}(x)\subset B(x,\delta \|V(x)\|)$, then
$|t|<3\delta$.
\item if $x\in M\setminus Sing(V)$, $0<\delta<r_0/3$, $|t|< r_0$ and $V_t(x)\in B(x, \delta\|V(x)\|)$, then $|t|<3\delta$.
\end{enumerate}
\end{lemma}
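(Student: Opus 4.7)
My plan is to transport everything to the flowbox chart $F_x\colon U_x(r_0)\to M$ from Lemma \ref{flowbox}, where the flow straightens out. Setting $v=0$ in the defining formula $F_x(v+tV(x))=V_t(\exp_x(v))$ shows that for $|s|<r_0$ the orbit through $x$ is precisely the straight segment $s\mapsto sV(x)\in U_x(r_0)$, whose chart-length is $|s|\,\|V(x)\|$. Hence the lemma reduces to comparing chart-distance with $M$-distance along this segment.

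The comparison tool comes from the conorm hypothesis $m(D_pF_x)>1/3$. Combined with the convexity of $U_x(r_0)$ (a direct sum of a normal ball with an interval), integrating along straight segments yields
\begin{equation*}
\|p-q\|\;\leq\;3\,d(F_x(p),F_x(q))\qquad\text{for } p,q\in U_x(r_0),
\end{equation*}
provided the minimizing path in $M$ between $F_x(p)$ and $F_x(q)$ stays inside $F_x(U_x(r_0))$; for the points considered below this will hold thanks to the smallness of $\delta$.

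Granting this, I would prove (2) first. If $|t|<r_0$ then $tV(x)\in U_x(r_0)$, so applying the displayed estimate to $p=tV(x)$ and $q=0$ gives
\begin{equation*}
|t|\,\|V(x)\|\;=\;\|tV(x)-0\|\;\leq\;3\,d(V_t(x),x)\;<\;3\delta\,\|V(x)\|,
\end{equation*}
and dividing by $\|V(x)\|>0$ yields $|t|<3\delta$. For (1), where no a priori bound on $|t|$ is provided, assume without loss of generality $t>0$ and set $T=\min(t,r_0)$. The argument of (2) applies to every $s\in[0,T)$ and gives $s<3\delta$, so $T\leq 3\delta$. The normalization $\delta<r_0/3$ then forces $T<r_0$, hence $T=t$, so $|t|<3\delta$.

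The main obstacle is not logical but rather of bookkeeping type: one must verify that the orbit piece (and the relevant minimizing geodesic in $M$) stays inside the flowbox long enough that the linearized estimate applies. It is precisely the hypothesis $\delta<r_0/3$ that closes this bootstrap, after which everything reduces to the straight-line computation in the chart.
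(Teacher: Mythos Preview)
The paper does not give its own proof of this lemma; it is simply quoted as Lemma~2.3 of \cite{ww}, so there is nothing in the paper to compare your argument against. That said, your approach via the flowbox chart $F_x$ is exactly the standard one, and it is in fact the same computation the paper itself invokes later (see the proof of Lemma~\ref{lem5}, where the authors write $\|t_0V(x)\|=\|F_x^{-1}(V_{t_0}(x))\|<3\,d(V_{t_0}(x),x)$ ``by Lemma~\ref{flowbox}'' without further comment). Your reduction of item~(1) to item~(2) via $T=\min(t,r_0)$ is correct: once $T\le 3\delta<r_0$ you must have $T=t$, and then (2) applied at $s=t$ gives the strict inequality.

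The one point you flag but do not quite close---that the minimizing geodesic from $x$ to $V_t(x)$ must remain in $F_x(U_x(r_0))$ for the inverse Lipschitz bound to apply---is real but routine, and the hypothesis $\delta<r_0/3$ is precisely what settles it. By orthogonality of the decomposition $v+tV(x)$, the domain $U_x(r_0)$ contains the Euclidean ball $B(0,r_0\|V(x)\|)\subset T_xM$. If $\gamma$ is a unit-speed minimizing geodesic from $x$ of length $<\delta\|V(x)\|$, then as long as $\gamma|_{[0,s]}$ stays in the flowbox image its pullback $F_x^{-1}\circ\gamma$ has length $<3s<3\delta\|V(x)\|<r_0\|V(x)\|$ and hence remains in a compact subset of $U_x(r_0)$; a standard open--closed continuation argument then shows $\gamma$ never exits. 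With this in hand your displayed estimate $\|p-q\|\le 3\,d(F_x(p),F_x(q))$ holds for the pairs you need, and the rest of your proof goes through.
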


Note that the constants $r_0$ in Lemma \ref{flowbox} and Lemma \ref{art2} are same.
The following Lemma follows the idea of Lemma 2.4 of \cite{ww}.

\begin{lemma}\label{ininterval}
Let $V$ be a $C^1$ vector field on $M$ and $r_0$ be given as in Lemma \ref{art2}. There is $\delta_0>0$ such that for any $x\in M\setminus Sing(V), y\in M$ and any increasing homeomorphism $h:\mathbb{R}\to\mathbb{R}$, if
$$d(V_t(x), V_{h(t)}(y))<\delta_0\|V(V_t(x))\|$$
for all $t\in\mathbb{R}$, then $|h(t)-h(0)-t|<r_0/2$ for all $t\in (-r_0,r_0)$.
\end{lemma}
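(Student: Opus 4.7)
The plan is to adapt the flowbox argument of Lemma 2.4 of \cite{ww}, but rather than applying Lemma \ref{flowbox} at $x$ itself (whose flowbox is too narrow in the flow direction to handle $t$ close to $\pm r_0$), I will use the two auxiliary basepoints $V_{r_0/2}(x)$ and $V_{-r_0/2}(x)$ on the orbit of $x$. Each will give the desired estimate on a subinterval of $(-r_0,r_0)$, and the two subintervals will cover the whole interval as soon as $\delta_0$ is small enough, say $\delta_0<r_0/90$. As a preliminary reduction, replacing $(y,h)$ by $(V_{h(0)}(y),\,h-h(0))$ preserves the hypothesis, so I may assume $h(0)=0$ and it suffices to prove $|h(t)-t|<r_0/2$ on $(-r_0,r_0)$.

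I first work with the flowbox $F_{V_{r_0/2}(x)}$, which is well-defined because singularities are flow-fixed and $x\notin Sing(V)$, so $V_{r_0/2}(x)\notin Sing(V)$. The hypothesis at $t=0$, together with the conorm bound in Lemma \ref{flowbox} and the norm ratio $\|V(x)\|/\|V(V_{r_0/2}(x))\|\le 3$ (itself read off from the flowbox at $x$), locates $y$ in this flowbox at coordinates $(v',\tau')$ with $\|v'\|\le 3\delta_0\|V(x)\|$ and $|\tau'+r_0/2|\le 9\delta_0$. While $V_{h(t)}(y)$ remains in the flowbox, the orbit of $y$ inside it is a flow line at constant normal height $v'$, so $V_{h(t)}(y)$ sits at coordinate $v'+(\tau'+h(t))V(V_{r_0/2}(x))$. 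Comparing with the coordinate $(t-r_0/2)V(V_{r_0/2}(x))$ of $V_t(x)$ and inserting the hypothesis at time $t$ yields, via the conorm bound and the norm ratio once more, $|\tau'+h(t)-t+r_0/2|\le 27\delta_0$ and hence $|h(t)-t|\le 36\delta_0<r_0/2$.

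The main obstacle is the bootstrap: I must verify that $V_{h(t)}(y)$ actually remains inside this flowbox throughout the interval I care about. The normal coordinate $v'$ is constant along the orbit and strictly smaller than the flowbox radius $r_0\|V(V_{r_0/2}(x))\|$, so the only way to exit is in the flow direction, at $|\tau'+h(t)|=r_0$. A putative forward exit at $t=b<r_0$ would force $h(b)=r_0-\tau'\ge 3r_0/2-9\delta_0$; by continuity the estimate $|h(b)-b|\le 36\delta_0$ extends to the boundary, whence $b\ge 3r_0/2-45\delta_0>r_0$ for $\delta_0<r_0/90$, a contradiction. A symmetric backward analysis confines any backward exit to $a\le -r_0/2+45\delta_0$, so the estimate $|h(t)-t|\le 36\delta_0$ is valid on $(-r_0/2+45\delta_0,\,r_0)$. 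Running the identical construction with $V_{-r_0/2}(x)$ in place of $V_{r_0/2}(x)$ delivers the same estimate on $(-r_0,\,r_0/2-45\delta_0)$, and the two intervals overlap for $\delta_0<r_0/90$, covering $(-r_0,r_0)$ and finishing the proof.
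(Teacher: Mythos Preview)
Your argument is correct and takes a genuinely different route from the paper's. The paper fixes a single time $t\in(0,r_0)$, splits into the two cases $h(t)-h(0)\le t$ and $h(t)-h(0)>t$, and in each case manufactures a short orbit segment to which Lemma~\ref{art2} applies; the estimates are driven by the local Lipschitz constant $L$ (via the bound $e^{-L|t|}\le\|V(V_t(x))\|/\|V(x)\|\le e^{L|t|}$) together with Lemma~\ref{controlofnorm}. Your approach instead avoids Lemmas~\ref{controlofnorm} and~\ref{art2} entirely, working purely with the bi-Lipschitz bounds of Lemma~\ref{flowbox} at the shifted basepoints $V_{\pm r_0/2}(x)$; the key geometric observation---that the orbit of $y$ is a flow line at constant normal height inside the flowbox---replaces the case analysis, and the continuity/monotonicity bootstrap handles the range of $t$ uniformly. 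Your version is conceptually cleaner and more self-contained, at the cost of the two-chart patching step; the paper's version is more computational but dovetails with the auxiliary lemmas already set up for later use. A minor remark: your constants are looser than necessary (the flowbox comparison actually gives $|\tau'+h(t)-t+r_0/2|\le 9\delta_0$ rather than $27\delta_0$), but since you only need $36\delta_0<r_0/2$ and overlap of the two charts, the choice $\delta_0<r_0/90$ is more than sufficient.
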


\begin{proof}
Let $L$ be a local Lipschitz constant of $V$ and $c$ be the constant given in Lemma \ref{controlofnorm}. By the continuous dependence of solutions with respect to initial conditions, we know that $d(V_t(x), V_t(y))\leq e^{L|t|}d(x,y)$ for any $x, y\in M$ and $t\in\mathbb{R}$. Then we know that $$e^{-L|t|}\leq\displaystyle\frac{\|V(V_t(x))\|}{\|V(x)\|}\leq e^{L|t|}$$
for any $x\in M\setminus Sing (V)$ and $t\in\mathbb{R}$.

Now we choose $\delta_0$ such that the following properties are satisfied:
$$e^{2Lr_0}\delta_0<c,\ \  6(1+2e^{2Lr_0})\delta_0<r_0/2,\ \  3(e^{Lr_0}+e^{2Lr_0})\delta_0<r_0/2, $$
$$ 9(e^{Lr_0}+e^{2Lr_0})\delta_0<c,\ \  6[3+9(e^{Lr_0}+e^{2Lr_0})]\delta_0<r_0/2. $$
Assume that $x\in M\setminus Sing(V), y\in M$ and homeomorphism $h:\mathbb{R}\to\mathbb{R}$ satisfy
$$d(V_t(x), V_{h(t)}(y))<\delta_0\|V(V_t(x))\|$$
for all $t\in\mathbb{R}$. We show that $|h(t)-h(0)-t|<r_0/2$ for any $t\in (-r_0, r_0)$

Without loss of generality, we assume that $t\in(0,r_0)$. First we consider the case of $h(t)-h(0)\leq t$. In this case we have
$$d(V_{h(t)}(y), V_{t+h(0)}(y))\leq d(V_{h(t)}(y),V_t(x))+d(V_t(x),V_{t+h(0)}(y))$$
$$< \delta_0\|V(V_t(x))\|+e^{Lt}d(x,V_{h(0)}(y))<\delta_0\|V(V_t(x))\|+\delta_0 e^{Lt}\|V(x)\|$$
$$\leq \delta_0\|V(V_t(x))\|+\delta_0 e^{2Lt}\|V(V_t(x))\|<(1+e^{2Lr_0})\delta_0\|V(V_t(x))\|.$$
In the proof of above inequalities, we have already shown that
\begin{equation}
\label{for}
d(V_t(x),V_{t+h(0)}(y))<\delta_0 e^{2Lt}\|V(V_t(x))\|<\delta_0 e^{2Lr_0}\|V(V_t(x))\|.
\end{equation}
By the assumption $e^{2Lr_0}\delta_0<c$ we know that $d(V_t(x),V_{t+h(0)}(y))<c\|V(V_t(x))\|$, hence $\|V(V_t(x))\|\leq 2\|V(V_{t+h(0)}(y))\|$. Then we have
$$d(V_{h(t)}(y), V_{t+h(0)}(y))<(1+e^{2Lr_0})\delta_0\|V(V_t(x))\|<2(1+e^{2Lr_0})\delta_0\|V(V_{t+h(0)}(y))\|.$$
By the assumption that $6(1+2e^{2Lr_0})\delta_0<r_0/2$ we know $2(1+e^{2Lr_0})\delta_0<\frac{r_0}{3}$. Note that
$$-r_0<-t<h(t)-(t+h(0))\leq 0.$$
We can get that $|h(t)-(t+h(0))|<6(1+e^{2Lr_0})\delta_0<r_0/2$ by the second item of Lemma \ref{art2}.

Now let us consider the case of $h(t)-h(0)>t$. Since $h$ is an increasing homeomorphism, there exists $s\in(0,t)$ such that $h(s)=h(0)+t$. Then
$$d(V_s(x), V_t(x))\leq d(V_s(x), V_{h(0)+t}(y))+d(V_{h(0)+t}(y), V_t(x))$$
$$=d(V_s(x), V_{h(s)}(y))+d(V_{t+h(0)}(y), V_t(x))\overset{\eqref{for}}{<}\delta_0\|V(V_s(x))\|+\delta_0e^{2Lt}\|V(V_t(x))\|$$
$$\leq\delta_0 e^{L(t-s)}\|V(V_t(x))\|+\delta_0 e^{2Lt}\|V(V_t(x))\|<(e^{Lr_0}+e^{2Lr_0})\delta_0 \|V(V_t(x))\|.$$
By the assumption that $3(e^{Lr_0}+e^{2Lr_0})\delta_0<r_0/2$ we know that $(e^{Lr_0}+e^{2Lr_0})\delta_0<\frac{r_0}{3}$. Noting that $0<t-s<t<r_0$, we have
$$t-s<3(e^{Lr_0}+e^{2Lr_0})\delta_0<r_0/2$$
from the second item of Lemma \ref{art2}. By Lemma \ref{flowbox} we know that for any $\tau'\in[s,t]$
$$d(V_{\tau'}(x),V_t(x))=d(F_{V_t(x)}((\tau'-t)V(V_t(x))), F_{V_t(x)}(0))<3|\tau'-t|\|V(V_t(x))\|$$
$$=3(t-\tau')\|V(V_t(x))\|\leq 3(t-s)\|V(V_t(x))\|<9(e^{Lr_0}+e^{2Lr_0})\delta_0\|V(V_t(x))\|.$$
By the assumption that $9(e^{Lr_0}+e^{2Lr_0})\delta_0<c$ we know
$$\|V(V_{\tau'}(x))\|\leq 2\|V(V_t(x))\|.$$

For any $\tau\in[h(0)+t, h(t)]=[h(s), h(t)]$, we can find $\tau'\in[s,t]$ such that $h(\tau')=\tau$, then we have
$$d(V_\tau(y), V_{h(t)}(y))\leq d(V_\tau(y), V_{\tau'}(x))+d(V_{\tau'}(x), V_{t}(x))+d(V_t(x), V_{h(t)}(y))$$
$$<\delta_0\|V(V_{\tau'}(x))\|+9(e^{Lr_0}+e^{2Lr_0})\delta_0\|V(V_t(x))\|+\delta_0\|V(V_t(x))\|.$$
$$\leq 2\delta_0 \|V(V_t(x))\|+9(e^{Lr_0}+e^{2Lr_0})\delta_0\|V(V_t(x))\|+\delta_0\|V(V_t(x))\|$$
$$=[3+9(e^{Lr_0}+e^{2Lr_0})]\delta_0\|V(V_t(x))\|.$$
Noting that $d(V_{h(t)}(y), V_t(x))<\delta_0\|V(V_t(x))\|<c\|V(V_t(x))\|$, we have $\|V(V_t(x))\|\leq 2\|V(V_{h(t)}(y))\|$ from Lemma \ref{controlofnorm}. Hence we have
$$d(V_\tau(y), V_{h(t)}(y))<2[3+9(e^{Lr_0}+e^{2Lr_0})]\delta_0\|V(V_{h(t)}(y))\|$$
for all $\tau\in[h(0)+t, h(t)]$. This means that
$$V_{[0, h(0)+t-h(t)]}(V_{h(t)}(y))\subset B(V_{h(t)}(y), 2[3+9(e^{Lr_0}+e^{2Lr_0})]\delta_0\|V(V_{h(t)}(y))\|)$$
By the choice of $\delta_0$ such that $6[3+9(e^{Lr_0}+e^{2Lr_0})]\delta_0<r_0/2$, we know $2[3+9(e^{Lr_0}+e^{2Lr_0})]\delta_0<\frac{r_0}{3}$.
By the first part of Lemma \ref{art2} we know
$|h(t)-h(0)-t|<6[3+9(e^{Lr_0}+e^{2Lr_0})]\delta_0<r_0/2$. This ends the proof of the lemma.
\end{proof}

\begin{lemma}\label{lem5}
Let $V$ be a $C^1$ vector field on $M$. There exists $\delta_0<\frac{r_0}{3}$ such that for any $\delta\in(0, \delta_0)$ and any $x\in M\setminus Sing(V)$ and any $y\in M$ with a increasing homeomorphism $h:\mathbb{R}\to\mathbb{R}$, if
$$d(V_t(x), V_{h(t)}(y))<\delta \|V(V_t(x))\|$$
for all $t\in\mathbb{R}$ and $V_{h(0)}(y)\in V_{(-r_0, r_0)}(x)$, then $$V_{h(t)}(y)\in V_{(-3\delta, 3\delta)}(V_t(x))\subset V_{(-r_0, r_0)}(V_t(x))$$ for any $t\in(-r_0, r_0)$.
\end{lemma}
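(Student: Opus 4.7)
My plan is to reduce the conclusion to two applications of Lemma \ref{art2}(2), bridged by the reparametrization control provided by Lemma \ref{ininterval}. First I would choose $\delta_0$ small enough to satisfy three constraints simultaneously: $\delta_0 < r_0/3$ (as demanded in the statement), $\delta_0$ no larger than the constant supplied by Lemma \ref{ininterval}, and $\delta_0 < r_0/6$ so that the bound $3\delta + r_0/2 < r_0$ will be available for all $\delta\in(0,\delta_0)$. The hypothesis $x\in M\setminus Sing(V)$ propagates to $V_t(x)\notin Sing(V)$ for every $t$, so Lemma \ref{art2}(2) and Lemma \ref{ininterval} remain applicable along the orbit.

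Next, I would unpack the auxiliary hypothesis $V_{h(0)}(y)\in V_{(-r_0,r_0)}(x)$ by writing $V_{h(0)}(y) = V_{t_0}(x)$ for some $t_0$ with $|t_0| < r_0$. Specializing the closeness hypothesis at $t=0$ gives $V_{t_0}(x)\in B(x,\delta\|V(x)\|)$, and a first application of Lemma \ref{art2}(2) yields $|t_0|<3\delta$. This is the key pointwise estimate that localizes the reparametrization near the identity at the basepoint.

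For arbitrary $t\in(-r_0,r_0)$, the flow property gives
$$V_{h(t)}(y) = V_{h(t)-h(0)}(V_{h(0)}(y)) = V_{t_0 + h(t)-h(0)}(x) = V_{\tau(t)}(V_t(x)),$$
where $\tau(t) := t_0 + (h(t)-h(0)) - t$. Lemma \ref{ininterval} (applicable because $\delta<\delta_0$ is below the threshold it provides) yields $|h(t)-h(0)-t| < r_0/2$ on $(-r_0,r_0)$, hence $|\tau(t)| < 3\delta + r_0/2 < r_0$ by the choice of $\delta_0$. Now the hypothesis at time $t$ says $V_{\tau(t)}(V_t(x)) = V_{h(t)}(y) \in B(V_t(x),\delta\|V(V_t(x))\|)$, so a second application of Lemma \ref{art2}(2), this time based at $V_t(x)$, improves the bound to $|\tau(t)|<3\delta$. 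This shows $V_{h(t)}(y)\in V_{(-3\delta,3\delta)}(V_t(x))$, and the inclusion into $V_{(-r_0,r_0)}(V_t(x))$ is immediate from $3\delta<r_0$.

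I do not anticipate any serious obstacle: the argument is essentially algebraic, turning a global reparametrization bound into a local one by using the coarser estimate from Lemma \ref{ininterval} to bring $|\tau(t)|$ into the regime where Lemma \ref{art2}(2) can then be applied to tighten it. The only thing to watch carefully is the bookkeeping of constants when defining $\delta_0$, specifically ensuring that $3\delta + r_0/2 < r_0$ so that the second application of Lemma \ref{art2}(2) is legitimate.
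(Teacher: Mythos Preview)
Your argument is correct and matches the paper's proof almost verbatim: same choice of $\delta_0$ (bounded by the constant from Lemma~\ref{ininterval} and by $r_0/6$), same decomposition $V_{h(t)}(y)=V_{\tau(t)}(V_t(x))$ with $\tau(t)=t_0+h(t)-h(0)-t$, same use of Lemma~\ref{ininterval} to get $|\tau(t)|<r_0$, and the same final application of Lemma~\ref{art2}(2). The only cosmetic difference is that the paper obtains $|t_0|<3\delta$ directly from the flowbox bound in Lemma~\ref{flowbox} rather than via Lemma~\ref{art2}(2), but this is the same estimate.
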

\begin{proof}
Let $\delta_0$ be chosen as in Lemma \ref{ininterval}. Without loss of generality, we can assume that $6\delta_0<r_0$. Assume that there are $\delta\in(0, \delta_0)$ and $x\in M\setminus Sing(V)$ and $y\in M$ with an increasing homeomorphism $h:\mathbb{R}\to\mathbb{R}$ such that
$$d(V_t(x), V_{h(t)}(y))<\delta \|V(V_t(x))\|$$
for all $t\in\mathbb{R}$ and $V_{h(0)}(y)\in V_{(-r_0, r_0)}(x)$. Denote by $V_{t_0}(x)=V_{h(0)}(y)$. Note that $d(V_{h(0)}(y), x)<\delta\|V(x)\|$, hence by Lemma \ref{flowbox} we have
$$\|t_0 V(x)\|=\|F_x^{-1}(V_{t_0}(x))\|<3d(V_{t_0}(x), x)<3\delta\|V(x)\|,$$
thus $|t_0|<3\delta$. Let any $t\in(-r_0, r_0)$ be given. It is easy to see that $$V_{h(t)}(y)=V_{h(t)-h(0)}(V_{h(0)}(y))=V_{h(t)-h(0)}(V_{t_0}(x))=V_{h(t)-h(0)-t+t_0}(V_t(x)).$$ By Lemma \ref{ininterval} we know that $|h(t)-h(0)-t|<r_0/2$, hence we have $|h(t)-h(0)-t+t_0|<\frac{r_0}{2}+|t_0|<\frac{r_0}{2}+3\delta_0<r_0$. By the fact that
$$d(V_{h(t)}(y), V_t(x))<\delta\|V(V_t(x))\|$$
we know $d(V_{h(t)-h(0)-t+t_0}(V_t(x)), V_t(x))<\delta \|V(V_t(x))\|$. From the second part of Lemma \ref{art2} we know $|h(t)-h(0)-t+t_0|<3\delta$. This says that $$V_{h(t)}(y)=V_{h(t)-h(0)-t+t_0}(V_t(x))\in V_{(-3\delta, 3\delta)}(V_t(x))$$ and ends the proof of the lemma.
\end{proof}

\begin{lemma}\label{lem6}
Let $V$ be a $C^1$ vector field on $M$. There exists $\delta_0$ such that for any $\delta\in(0, \delta_0)$ and any $x\in M\setminus Sing(V)$ and any $y\in M$ with a increasing homeomorphism $h:\mathbb{R}\to\mathbb{R}$, if
$$d(V_t(x), V_{h(t)}(y))<\delta \|V(V_t(x))\|$$
for all $t\in\mathbb{R}$ and $V_{h(0)}(y)\in V_{(-r_0, r_0)}(x)$, then $$V_{h(t)}(y)\in V_{(-3\delta, 3\delta)}(V_t(x))$$ for any $t\in\mathbb{R}$.
\end{lemma}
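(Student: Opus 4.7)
The plan is to bootstrap Lemma \ref{lem5} from its ``local-in-time'' conclusion on the window $(-r_0, r_0)$ to a global conclusion on all of $\mathbb{R}$, by a straightforward connectedness argument exploiting the translation-invariance of the hypothesis. First I would take $\delta_0$ to be the constant from Lemma \ref{lem5}, shrunk if necessary so that $3\delta_0 < r_0$; this ensures the inclusion $V_{(-3\delta, 3\delta)}(p) \subset V_{(-r_0, r_0)}(p)$ that is needed to feed the conclusion of Lemma \ref{lem5} back into its own hypothesis.

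Next I would introduce the set of ``good times''
$$K = \{\, t\in\mathbb{R} : V_{h(t)}(y)\in V_{(-3\delta, 3\delta)}(V_t(x))\,\},$$
and observe that Lemma \ref{lem5} applied directly to $(x,y,h)$ gives $(-r_0, r_0)\subset K$, so $K$ is nonempty. The key step is then to show that $t_0\in K$ implies $(t_0-r_0,\, t_0+r_0)\subset K$. For this, I would translate the data by setting $x' = V_{t_0}(x)$, $y' = y$, and $h'(t) = h(t+t_0)$. Note $x'\notin Sing(V)$ because $Sing(V)$ is flow-invariant, and $h'$ is again an increasing homeomorphism of $\mathbb{R}$. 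The hypothesis of Lemma \ref{lem5} for $(x',y',h')$ becomes
$$d(V_t(x'), V_{h'(t)}(y')) = d(V_{t+t_0}(x), V_{h(t+t_0)}(y)) < \delta\|V(V_{t+t_0}(x))\| = \delta\|V(V_t(x'))\|,$$
and the ``localization'' condition $V_{h'(0)}(y')\in V_{(-r_0,r_0)}(x')$ holds, because $V_{h(t_0)}(y)\in V_{(-3\delta,3\delta)}(V_{t_0}(x))\subset V_{(-r_0,r_0)}(V_{t_0}(x))$ by $t_0\in K$ and $3\delta<r_0$. Lemma \ref{lem5} applied to $(x',y',h')$ then gives $V_{h(s)}(y)\in V_{(-3\delta,3\delta)}(V_s(x))$ for every $s\in(t_0-r_0,\, t_0+r_0)$, that is, $(t_0-r_0,\, t_0+r_0)\subset K$.

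To finish, let $(a,b)$ denote the connected component of $K$ containing $0$. By the previous step, for every $t_0\in(a,b)$ the interval $(t_0-r_0,\, t_0+r_0)$ lies in $K$, and by connectedness it is contained in $(a,b)$. Hence $t_0+r_0<b$ for every such $t_0$; letting $t_0\to b^-$ yields $b+r_0\leq b$, which forces $b=+\infty$. The symmetric argument gives $a=-\infty$, so $K=\mathbb{R}$, which is precisely the conclusion of the lemma.

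I do not anticipate any serious obstacle: the genuine analytic work already lives in Lemma \ref{lem5}, and the present statement is essentially a continuation principle. The only care required is the choice of $\delta_0$ enforcing $3\delta_0<r_0$, which is exactly what makes the iteration of Lemma \ref{lem5} legitimate.
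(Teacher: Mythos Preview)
Your proof is correct and follows essentially the same approach as the paper: both bootstrap Lemma \ref{lem5} by translating along the flow and reapplying it, using that $3\delta<r_0$ so the conclusion feeds back into the hypothesis. The only cosmetic difference is that the paper phrases the extension as an explicit induction in steps of $r_0/2$ (shifting $\tilde x=V_{r_0/2}(x)$, $\tilde y=V_{h(r_0/2)}(y)$, $\tilde h(t)=h(t+r_0/2)-h(r_0/2)$), whereas you package the same iteration as a connectedness argument on the set $K$ of good times.
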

\begin{proof}
Let $\delta_0$ be the constant chosen in Lemma \ref{lem5} and $\delta\in(0,\delta_0]$ be given. If $x\in M\setminus Sing(V)$ and $y\in M$ with a increasing homeomorphism $h:\mathbb{R}\to\mathbb{R}$ satisfy $$d(V_t(x), V_{h(t)}(y))<\delta \|V(V_t(x))\|$$
for all $t\in\mathbb{R}$ and $V_{h(0)}(y)\in V_{(-r_0, r_0)}(x)$. Then by Lemma \ref{lem5} we know that
$$V_{h(\frac{r_0}{2})}(y)\in V_{(-r_0, r_0)}(V_{\frac{r_0}{2}}x).$$
Let $\tilde{h}(t)=h(t+\frac{r_0}{2})-h(\frac{r_0}{2}), \tilde{x}=V_{\frac{r_0}{2}}x, \tilde{y}=V_{h(\frac{r_0}{2})}(y)$, then
$$d(V_t(\tilde{x}),V_{\tilde{h}(t)}(\tilde{y}))=d(V_{t+\frac{r_0}{2}}(x), V_{h(t+\frac{r_0}{2})}(y))<\delta\|V(V_{t+\frac{r_0}{2}}(x))\|=\delta\|V(V_t(\tilde{x}))\|$$
for all $t\in\mathbb{R}$. And we also have $V_{\tilde{h}(0)}(\tilde{y})\in V_{(-r_0, r_0)}(\tilde{x})$. Hence for any $t\in(-r_0, r_0)$ we have
$$V_{\tilde{h}(t)}(\tilde{y})\in V_{(-3\delta, 3\delta)}(V_t(\tilde{x})),$$
and then
$$V_{h(t+\frac{r_0}{2})}(y)\in V_{(-3\delta, 3\delta)}(V_{t+\frac{r_0}{2}}(x)).$$
This prove that $V_{h(t)}(y)\in V_{(-3\delta, 3\delta)}(V_{t}(x))$ for any $t\in[0, \frac{3}{2}r_0]$. Similarly we prove the same thing holds as $t\in[-\frac{3}{2}r_0, 0]$. By induction we can prove that for any $n\in\mathbb{N}$ we have $V_{h(t)}(y)\in V_{(-3\delta, 3\delta)}(V_{t}(x))$ for any $t\in[-\frac{n}{2}r_0, \frac{n}{2}r_0]$. Hence $V_{h(t)}(y)\in V_{(-3\delta, 3\delta)}(V_{t}(x))$ for all $t\in \mathbb{R}$.
\end{proof}

\begin{proof}[Proof of Proposition \ref{thA}]
Let $V$ be a $C^1$ vector field and $\Lambda$ be a compact invariant set of $V$.
Assume that $V$ is singular-expansive on $\Lambda$.
Since $V$ is $C^1$ (in particular Lipschitz), there is
$B>0$ such that
$$
\|V(z)\|\leq B dist(z,Sing(V)),\quad\quad\forall z\in M.
$$
Fix $\epsilon>0$. Without loss of generality, we assume that $\epsilon<r_0$. For this $\epsilon$ we let $\delta'>0$ be given by the singular-expansivity of $V$ on $\Lambda$.
Define $\delta=\min\{\frac{\delta'}B, \delta_0, \frac{\epsilon}{3}\}$ where $\delta_0$ was given as in Lemma \ref{lem6}. Suppose that
$$
d(V_t(x),V_{s(t)}(y))\leq \delta \|V(V_t(x))\|
$$
for every $t\in\mathbb{R}$ and $x\in \Lambda\setminus Sing(V)$, $y\in \Lambda$ and an increasing homeomorphism $s:\mathbb{R}\to \mathbb{R}$.

Then, $x,y\in \Lambda$ satisfy
$$
\delta \|V(V_t(x))\|\leq \frac{\delta'}B \|V(V_t(x))\|\leq \delta'dist(V_t(x),Sing(V)),
\quad\quad\forall t\in\mathbb{R}
$$
and so
$$
d(V_t(x),V_{s(t)}(y))\leq \delta'dist(V_t(x),Sing(V)),
\quad\quad\forall t\in\mathbb{R}.
$$
It follows from singular expansivity that $V_{s(t_0)}(y)=V_{\tau}(V_{t_0}(x))$ for some $\tau\in [-\epsilon,\epsilon]$ and $t_0\in\mathbb{R}$. Now let $\tilde{x}=V_{t_0}(x), \tilde{y}=V_{s(t_0)}(y), \tilde{s}(t)=s(t+t_0)-s(t_0)$, then we have
$$d(V_t(\tilde{x}),V_{\tilde{s}(t)}(\tilde{y}))=d(V_{t+t_0}(x), V_{s(t+t_0)}(y))\leq \delta \|V(V_{t+t_0}(x))\|=\delta\|V(V_t(\tilde{x}))\|,$$
and $V_{\tilde{s}(0)}(\tilde y)=V_{\tau}(\tilde{x})\in V_{(-r_0, r_0)}(\tilde{x})$. By Lemma \ref{lem6} we have
$$V_{\tilde{s}(t)}(\tilde y)\in V_{(-3\delta, 3\delta)}(V_t(\tilde{x}))$$
for all $t\in\mathbb{R}$. Thus
$$V_{{s}(t)}(y)\in V_{(-3\delta, 3\delta)}(V_t({x}))\subset V_{[-\epsilon, \epsilon]}(V_t(x))$$
for all $t\in\mathbb{R}$. Therefore, $V$ is rescaling expansive on $\Lambda$ and the proof follows.
\end{proof}

\section{Singular-expansive flows and Proof of Proposition \ref{p1}}
\label{sec3}

\noindent
In this section we will transform the notion of singular-expansivity from vector fields on closed manifolds (Definition \ref{sing-exp}) to flows on compact metric spaces.
In particular, we will prove Proposition \ref{p1}.
Previously, we present some basic definitions and facts.

Let $X$ be a metric space.
A {\em flow} of $X$ is a continuous map $\phi:\mathbb{R}\times X\to X$
satisfying $\phi(0,x)=x$ and $\phi(t+s,x)=\phi(t,\phi(s,x))$ for every $x\in X$ and $t,s\in \mathbb{R}$.
Denote by $\phi_t(x)=\phi(t,x)$ the time $t$-map.
If $I\subset \mathbb{R}$ we denote $\phi_I(x)=\{\phi_t(x):t\in I\}$. We define $\phi_{[a,b]}(x)=\phi_{[b,a]}(x)$ for $a\geq b$.
Recall that a {\em singularity} of a flow $\phi$ is a point $\sigma\in X$ such that $\phi_t(\sigma)=\sigma$ for every $t\in\mathbb{R}$.
Denote by $Sing(\phi)$ the set of singularities of $\phi$. Notice that the singularities of a vector field match with the singularities of its corresponding flow.

For the sake of comparison we will present the current notions of expansivity for flows on metric spaces.
The first one is the classical notion of expansive flow by Bowen and Walters \cite{bw}.

\begin{definition}
We say that $\phi$ is {\em expansive}
on $\Lambda\subset X$ if for every $\epsilon>0$ there is $\delta>0$ such that if $x,y\in \Lambda$ satisfy
$
d(\phi_t(x),\phi_{s(t)}(y))\leq \delta
$
for every $t\in\mathbb{R}$ and some continuous map $s: \mathbb{R}\to \mathbb{R}$ fixing $0$, then
$y\in \phi_{[-\epsilon,\epsilon]}(x)$.
If $\phi$ is expansive on $X$, we just say that $\phi$ is expansive.
\end{definition}

The second is the notion of k*-expansive flow by Komuro \cite{k}.

\begin{definition}
We say that $\phi$ is
{\em k*-expansive} on $\Lambda$ if
for every $\epsilon>0$ there is $\delta>0$ such that if $x,y\in \Lambda$ satisfy $
d(\phi_t(x),\phi_{s(t)}(y))\leq \delta
$
for every $t\in\mathbb{R}$ and some increasing homeomorphism $s: \mathbb{R}\to \mathbb{R}$ fixing $0$, then $\phi_{s(t_0)}(y)\in\phi_{[t_0-\epsilon,t_0+\epsilon]}(x)$ for some $t_0\in \mathbb{R}$.
If $\phi$ is k*-expansive on $X$, we just say that $\phi$ is k*-expansive.
\end{definition}

Now, by replacing
the vector field $V$ by a general flow $\phi$  in Definition \ref{sing-exp} we obtain the following definition.

\begin{definition}
We say that $\phi$ is {\em singular-expansive on $\Lambda$}
if for every $\epsilon>0$ there is $\delta>0$ such that if
$x,y\in \Lambda$ satisfy
$
d(\phi_t(x),\phi_{s(t)}(y))\leq \delta dist(\phi_t(x),Sing(\phi))
$
for every $t\in\mathbb{R}$ and some increasing homeomorphism $s: \mathbb{R}\to \mathbb{R}$, then $\phi_{s(t_0)}(y)\in\phi_{[t_0-\epsilon,t_0+\epsilon]}(x)$ for some $t_0\in\mathbb{R}$.
If $\phi$ is singular-expansive on $X$, we just say that $\phi$ is a {\em singular-expansive flow}.
\end{definition}

We can therefore reformulate Proposition \ref{thA} by saying that
if the flow of a vector field $V$ of a closed manifold $M$ is singular-expansive on $\Lambda\subset M$, then such a flow is also rescaling expansive on $\Lambda$. Therefore,
the property of being singular-expansive is stronger than rescaling expansive.

On the other hand, there are singular-expansive flows which are not expansive: take for instance the trivial flow namely the one for which every point is a singularity (it is clearly singular-expansive
but not expansive unless the space is finite).
In particular, singular-expansive flows which are not k*-expansive also exist.
 
The next result asserts that every k*-expansive flow is singular-expansive.

\begin{thm}
\label{alcu}
Let $\phi$ be a flow of a compact metric space $X$.
If $\phi$ is k*-expansive on $\Lambda\subset X$, then $\phi$ is singular-expansive on $\Lambda$.
\end{thm}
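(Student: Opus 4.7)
The plan rests on two observations. First, on a compact metric space $X$ the scaling factor $dist(\cdot, Sing(\phi))$ appearing in the singular-expansive hypothesis is uniformly bounded above by $D := \sup_{z \in X} dist(z, Sing(\phi)) \leq \operatorname{diam}(X) < \infty$, so the singular-expansive inequality immediately implies a uniform tracking inequality with constant $\delta\cdot D$. Second, the only structural difference between the two definitions (k*-expansivity forcing $s(0) = 0$, singular-expansivity allowing $s(0)$ arbitrary) is cosmetic and can be absorbed by a reparameterization along the flow, using invariance of $\Lambda$ (implicit here, as in the companion Proposition \ref{thA}).

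Concretely, given $\epsilon > 0$ I would take $\delta_1 > 0$ from the k*-expansivity of $\phi$ on $\Lambda$ for this $\epsilon$ and put $\delta = \delta_1 / D$ (the degenerate case $D = 0$ is immediate). Suppose $x, y \in \Lambda$ and an increasing homeomorphism $s \colon \mathbb{R} \to \mathbb{R}$ satisfy $d(\phi_t(x), \phi_{s(t)}(y)) \leq \delta \cdot dist(\phi_t(x), Sing(\phi))$ for every $t \in \mathbb{R}$. Since $dist(\phi_t(x), Sing(\phi)) \leq D$, this yields the uniform bound $d(\phi_t(x), \phi_{s(t)}(y)) \leq \delta_1$ for every $t$. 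Now reparameterize: set $\tau_0 = s(0)$, $y' = \phi_{\tau_0}(y) \in \Lambda$ (by invariance of $\Lambda$), and $s'(t) = s(t) - \tau_0$. Then $s'$ is an increasing homeomorphism with $s'(0) = 0$ and $\phi_{s'(t)}(y') = \phi_{s(t)}(y)$, so $d(\phi_t(x), \phi_{s'(t)}(y')) \leq \delta_1$ for all $t$. K*-expansivity applied to the pair $(x, y') \in \Lambda \times \Lambda$ with $s'$ yields $t_0 \in \mathbb{R}$ for which $\phi_{s'(t_0)}(y') \in \phi_{[t_0 - \epsilon, t_0 + \epsilon]}(x)$; unraveling the substitutions gives $\phi_{s(t_0)}(y) \in \phi_{[t_0 - \epsilon, t_0 + \epsilon]}(x)$, which is exactly the singular-expansive conclusion.

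I do not anticipate a serious obstacle: the whole argument reduces to the compactness of $X$ (to uniformly bound $dist(\cdot, Sing(\phi))$) plus the flow reparameterization. The only point worth flagging is the degenerate case $Sing(\phi) = \emptyset$, where $dist(\cdot, Sing(\phi)) = +\infty$ by the standard convention and the singular-expansive hypothesis becomes vacuously weak; this must be handled either by adopting a suitable convention for $dist(\cdot, \emptyset)$, or by invoking Oka's theorem so that in the nonsingular setting k*-expansivity coincides with the classical expansivity and the singular-expansive condition is read accordingly.
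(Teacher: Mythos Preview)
Your argument is correct and follows the same strategy as the paper's proof: bound $dist(\cdot,Sing(\phi))$ uniformly by $diam(X)$ (the paper uses $diam(X)$ directly rather than your $D$), so that the singular-expansive tracking hypothesis implies the uniform $\delta_1$-tracking needed for k*-expansivity. You are in fact more careful than the paper, which applies k*-expansivity to $(x,y,s)$ without explicitly reparameterizing to force $s(0)=0$ and does not flag the $Sing(\phi)=\emptyset$ edge case.
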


\begin{proof}
Fix $\epsilon>0$ and let $\delta>0$ be given by the k*-expansivity of $\phi$ on $\Lambda$.
Take $x,y\in X$ and an increasing homeomorphism $s: \mathbb{R}\to \mathbb{R}$ such that
$d(\phi_t(x),\phi_{s(t)}(y))\leq
\frac{\delta}{diam(X)} dist(\phi_t(x),Sing(\phi))$ for every $t\in\mathbb{R}$.
Since $dist(z,Sing(\phi))\leq diam(X)$ for every $z\in X$, we get
$d(\phi_t(x),\phi_{s(t)}(y)\leq \delta$ for every $t\in\mathbb{R}$.
Then, by k*-expansivity and the choice of $\delta$ we get
$\phi_{s(t_0)}(y)\in\phi_{[t_0-\epsilon,t_0+\epsilon]}(x)$ proving the result.
\end{proof}

\begin{proof}[Proof of Proposition \ref{p1}]
Just apply Theorem \ref{alcu} to the flow generated by $V$.
\end{proof}

We finish this section by proving that both the rescaling and singular-expansivity are equivalent when the vector field is invertible at the singularities. More precisely, the following result holds.

\begin{thm}
\label{co1}
Let $V$ be a $C^1$ vector field of a closed manifold $M$
such that $DV(\sigma)$ is invertible for every $\sigma\in Sing(V)$. Then, $V$ is singular-expansive on $\Lambda\subset M$ if and only if it is rescaling expansive on $\Lambda$.
\end{thm}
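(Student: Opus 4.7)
The plan is to reduce Theorem \ref{co1} to a two-sided comparison between $\|V(z)\|$ and $dist(z,Sing(V))$. The forward implication (singular-expansive $\Rightarrow$ rescaling expansive) is already contained in Proposition \ref{thA}, whose proof uses only the Lipschitz upper bound $\|V(z)\|\leq B\cdot dist(z,Sing(V))$ and makes no use of the invertibility hypothesis. So only the converse requires new work.

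For the converse I would use the invertibility of $DV(\sigma)$ at each $\sigma\in Sing(V)$ to produce a matching global lower bound
$$\|V(z)\|\geq A\cdot dist(z,Sing(V)),\qquad \forall z\in M,$$
for some constant $A>0$. To establish this, first observe that invertibility of $DV(\sigma)$ forces each singularity to be isolated, so $Sing(V)$ is finite by compactness of $M$. Near a fixed $\sigma\in Sing(V)$, a Taylor expansion in local coordinates gives $V(z)=DV(\sigma)(z-\sigma)+o(\|z-\sigma\|)$, and combining this with the positivity of the conorm $m(DV(\sigma))>0$ yields a neighborhood $U_\sigma$ on which $\|V(z)\|\geq \tfrac12 m(DV(\sigma))\cdot\|z-\sigma\|$; shrinking $U_\sigma$ if necessary so that $\sigma$ is the nearest singularity to every $z\in U_\sigma$, one has $dist(z,Sing(V))=\|z-\sigma\|$. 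On the complement of $\bigcup_\sigma U_\sigma$, the continuous function $\|V\|$ is bounded below by some $\alpha>0$ and $dist(\cdot,Sing(V))$ is bounded above by $diam(M)$, so the same inequality holds there with constant $\alpha/diam(M)$. Taking $A$ to be the minimum of these finitely many constants gives the desired global bound.

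Given this bound, the converse becomes a routine substitution. Fix $\epsilon>0$ and let $\delta'>0$ be the constant provided by rescaling expansivity of $V$ on $\Lambda$. Set $\delta:=A\delta'$. Suppose $x,y\in\Lambda$ and an increasing homeomorphism $s:\mathbb{R}\to\mathbb{R}$ satisfy $d(V_t(x),V_{s(t)}(y))\leq \delta\cdot dist(V_t(x),Sing(V))$ for every $t\in\mathbb{R}$. If $x\notin Sing(V)$ then $V_t(x)\notin Sing(V)$ for all $t$ by invariance of $Sing(V)$, and the lower bound gives
$$d(V_t(x),V_{s(t)}(y))\leq \frac{\delta}{A}\|V(V_t(x))\|=\delta'\|V(V_t(x))\|,$$
so rescaling expansivity yields $V_{s(0)}(y)\in V_{[-\epsilon,\epsilon]}(x)$; this is the singular-expansive conclusion with $t_0=0$. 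The degenerate case $x\in Sing(V)$ forces $dist(V_t(x),Sing(V))=0$ for all $t$ and hence $V_{s(t)}(y)=x$ for all $t$, so $y=x$ and the conclusion holds trivially with $t_0=0$.

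I do not anticipate a serious obstacle: the content lies in the local Taylor argument producing the lower bound near each singularity, which is standard once $DV(\sigma)$ is assumed invertible. The remainder is a matter of comparing constants in the two definitions, exploiting the fact that rescaling expansivity in fact yields a stronger form of the desired conclusion (namely $t_0=0$) than the singular-expansive definition requires.
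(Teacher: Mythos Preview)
Your proposal is correct and follows essentially the same route as the paper. The paper also splits into the forward implication via Proposition \ref{thA} and, for the converse, invokes the same inequality $dist(z,Sing(V))\leq C\|V(z)\|$ (equivalently your lower bound $\|V(z)\|\geq A\cdot dist(z,Sing(V))$), citing Lemma 3.9 of \cite{a} rather than sketching the Taylor-plus-compactness argument you give; the substitution and the observation that rescaling expansivity already yields the conclusion at $t_0=0$ are identical.
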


\begin{proof}
Clearly the following property implies that $V$ is singular-expansive on $\Lambda$:

\medskip

\begin{enumerate}
\item[(*)]
For every $\epsilon>0$ there is $\delta>0$ such that
if $x,y\in \Lambda$ satisfy
$$
d(V_t(x),V_{s(t)}(y))\leq \delta dist(V_t(x),Sing(V))
$$
for every $t\in\mathbb{R}$ and some increasing homeomorphism $s: \mathbb{R}\to \mathbb{R}$, then $V_{s(0)}(y)\in  V_{(-\epsilon, \epsilon)}(x)$.
\end{enumerate}

\medskip

On the other hand, by Proposition \ref{thA}, the singular-expansivity of $V$ on $\Lambda$ implies the rescaling expansivity of $V$ on $\Lambda$.
It remains to prove that the rescaling expansivity of $V$ on $\Lambda$ implies (*).

First note that
by Lemma 3.9 in \cite{a} there is $C>0$ such that
$$
dist(z,Sing(V))\leq C \|V(z)\|,\quad\quad\forall z\in M.
$$
Fix $\epsilon>0$ and let $\delta'$ be given by rescaling-expansivity.
Let $\delta=\frac{\delta'}C$ and suppose that
$$
d(V_t(x),V_{s(t)}(y))\leq \delta dist(V_t(x),Sing(V))
$$
for every $t\in\mathbb{R}$ and some increasing homeomorphism $s:\mathbb{R}\to \mathbb{R}$.
Then,
$$
\delta dist(V_t(x),Sing(V))\leq \frac{\delta'}C dist(V_t(x),Sing(V))\leq \delta'\|V(V_t(x))\|,
\quad\quad\forall t\in\mathbb{R}
$$
hence
$
d(V_t(x),V_{s(t)}(y))\leq \delta' \|V(V_t(x))\|,
$ for every  $t\in\mathbb{R}$
and so $V_{s(0)}(y)=V_t(x)$ for some $t\in [-\epsilon,\epsilon]$. This completes the proof.
\end{proof}

These results motivate the study of the singular-expansive flows. In the next section we will present our results in this direction.

\section{Properties of singular-expansive flows}
\label{secc3}

\noindent
In the sequel we will study the topological properties of the singular-expansive flows.
This requires some notations.

Let $\phi$ be a flow of a compact metric space $X$.
We say that $x\in X$ is a {\em periodic point} of $\phi$ if $x\notin Sing(\phi)$ but there is $t>0$ such that $\phi_t(x)=x$. The minimal of such $t's$ is called the {\em period} of $x$.
We say that $x\in X$ is {\em nonwandering} if $U\cap (\bigcap_{t\geq T}\phi_t(U))\neq\emptyset$ for every $T>0$ and every neighborhood $U$ of $x$.
The set of nonwandering points is denoted by $\Omega(\phi)$.

We say that $K\subset X$ is {\em invariant} if $\phi_t(K)=K$ for every $t\in\mathbb{R}$. In such a case we denote by $\phi|_K$ the flow restricted to $K$.
We say that $K$ is {\em dynamically isolated} if
there is a neighborhood $U$ of $K$ (called {\em isolated block}) such that
$$
K=\bigcap_{t\in \mathbb{R}}\phi_t(U).
$$

\begin{thm}
\label{thAA}
The following properties hold for every singular-expansive flow $\phi$ of a compact metric space $X$.

\begin{enumerate}

\item
The set of periodic orbits of $\phi$ is countable.

\medskip

\item
If $Sing(\phi)=\emptyset$ or consists of finitely many isolated points of $X$, then $\phi$ is expansive.

\medskip

\item
If $Sing(\phi)$ is dynamically isolated, the set of periodic orbits with period $\tau\in [0,t]$ is finite ($\forall t>0$).
\end{enumerate}
\end{thm}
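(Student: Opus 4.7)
The plan is to prove all three parts by a single technique. The key observation is that on any compact subset $K \subset X$ at positive distance from $Sing(\phi)$, the factor $dist(\cdot, Sing(\phi))$ is bounded below by a positive constant, so the singular-expansive condition restricts there to a uniform k*-type condition. The structural hypotheses in (2) and (3) produce such a $K$ directly (for all points in (2), or for orbits of bounded period in (3)); in (1) one handles the general case by stratifying $X \setminus Sing(\phi) = \bigcup_n K_n$ with $K_n = \{x : dist(x, Sing(\phi)) \geq 1/n\}$ and treating one stratum at a time.

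For (1), every periodic orbit is compact and disjoint from $Sing(\phi)$, hence contained in some $K_n$. Letting $\mathcal{P}_{n,m}$ denote the periodic orbits in $K_n$ of period at most $m$, the set of all periodic orbits is the countable union $\bigcup_{n,m} \mathcal{P}_{n,m}$, so it suffices to show each $\mathcal{P}_{n,m}$ is finite. I would argue by contradiction: extract distinct $O_i \in \mathcal{P}_{n,m}$ with base points $x_i \to x^* \in K_n$ and periods $\tau_i \to \tau^* \in [0,m]$. The case $\tau^* = 0$ forces the $O_i$ to collapse (in Hausdorff distance, by uniform continuity of $\phi$) onto $x^*$, making $\phi_s(x^*) = x^*$ for all $s$, so $x^*$ is singular---contradicting $x^* \in K_n$. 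In the case $\tau^* > 0$, the reparametrized loops $\gamma_i(\theta) := \phi_{\tau_i \theta}(x_i)$, each $1$-periodic in $\theta$, converge uniformly on $\mathbb{R}$ to the $1$-periodic loop $\gamma^*(\theta) = \phi_{\tau^* \theta}(x^*)$; in particular $x^*$ is periodic, its orbit $O^*$ is nonsingular and compact, and $dist(O^*, Sing(\phi)) \geq 2\eta$ for some $\eta > 0$. For $i, j$ large, the linear time-change $s_{ij}(t) := (\tau_j/\tau_i)\,t$ is an increasing homeomorphism of $\mathbb{R}$ along which $d(\phi_t(x_i), \phi_{s_{ij}(t)}(x_j)) = d(\gamma_i(t/\tau_i), \gamma_j(t/\tau_i))$ is uniformly small in $t$, while $dist(\phi_t(x_i), Sing(\phi)) \geq \eta$. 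Singular-expansivity then produces a $t_0$ with $\phi_{s_{ij}(t_0)}(x_j) \in \phi_{[t_0-\epsilon, t_0+\epsilon]}(x_i) \subset O_i$, whence $O_i \cap O_j \neq \emptyset$ and $O_i = O_j$---the desired contradiction.

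For (2), each hypothesis yields a uniform $\eta > 0$ with $dist(z, Sing(\phi)) \geq \eta$ for every $z \in X \setminus Sing(\phi)$. On nonsingular pairs the singular-expansive condition with constant $\delta$ implies the k*-expansive condition with constant $\delta\eta$, so Oka's theorem \cite{o} upgrades this to Bowen-Walters expansivity on $X \setminus Sing(\phi)$; pairs involving a singularity are handled by choosing the expansivity constant smaller than both $\eta$ and the minimum gap between distinct singularities, which makes the hypothesis vacuous unless both points coincide in $Sing(\phi)$. For (3), let $U$ be an isolating block. Since a periodic orbit $O$ is invariant and nonsingular, $O \not\subset U$ (else $O \subset \bigcap_t \phi_t(U) = Sing(\phi)$). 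I then claim all periodic orbits of period at most $t$ satisfy $dist(O, Sing(\phi)) \geq \eta > 0$ for a uniform $\eta$: if $dist(O_n, Sing(\phi)) \to 0$ along a sequence of such orbits, uniform continuity of $\phi$ on $[0,t] \times X$ would collapse the $O_n$ in Hausdorff distance to a single singularity, placing them inside $U$ for large $n$ and contradicting the invariance observation above. All such orbits then lie in the single compact stratum $\{dist(\cdot, Sing(\phi)) \geq \eta\}$, and the argument from (1) with the period cap $t$ delivers finiteness. The hardest step, across (1) and (3), is the $\tau^* > 0$ case: verifying that the rescaled inequality $d(\phi_t(x_i), \phi_{s_{ij}(t)}(x_j)) \leq \delta \cdot dist(\phi_t(x_i), Sing(\phi))$ really holds uniformly in $t \in \mathbb{R}$, not just on a single period. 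I expect this to follow from periodicity of both sides together with uniform convergence of $\gamma_i$ on one fundamental period and the uniform lower bound on $dist(\cdot, Sing(\phi))$ near $O^*$.
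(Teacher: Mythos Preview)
Your argument is correct. The overall stratification idea---work on compact sets at positive distance from $Sing(\phi)$, where the singular-expansive inequality becomes a uniform one---is the same as the paper's, but the execution differs. The paper first proves a general lemma (Lemma~\ref{puestico}, via Lemma~\ref{thB}): a singular-expansive flow is Bowen--Walters expansive on every nonsingular compact invariant set. It then builds the invariant strata $X_\delta=\bigcap_{t}\phi_t(X\setminus U_\delta(Sing(\phi)))$ and simply cites the classical fact that expansive flows have countably many periodic orbits for Item~(1), reads off Item~(2) directly from Lemma~\ref{thB}, and for Item~(3) couples expansivity on $X_\delta$ with Lemma~\ref{cons} (orbits near $Sing(\phi)$ cannot have small diameter when $Sing(\phi)$ is dynamically isolated).

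You bypass the reduction to Bowen--Walters expansivity entirely. For (1) and (3) you prove finiteness of $\mathcal P_{n,m}$ directly, using the linear reparametrization $s_{ij}(t)=(\tau_j/\tau_i)t$ between nearby periodic orbits to feed the singular-expansive hypothesis and force $O_i=O_j$; for (2) you invoke Oka's theorem rather than the paper's Lemma~\ref{thB}. The paper's route is shorter and more modular (one lemma plus citations), while yours is self-contained and in fact yields the slightly sharper statement that each $\mathcal P_{n,m}$ is finite, not merely that the union is countable. Your ``hardest step'' is indeed fine: both $t\mapsto d(\phi_t(x_i),\phi_{s_{ij}(t)}(x_j))$ and $t\mapsto dist(\phi_t(x_i),Sing(\phi))$ are $\tau_i$-periodic, so uniform convergence of the rescaled loops $\gamma_i$ on one fundamental period, together with the lower bound coming from $dist(O^*,Sing(\phi))\geq 2\eta$, propagates to all of $\mathbb{R}$.
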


Item (3) above is false without the hypothesis that $Sing(\phi)$ is dynamically isolated. Based on Subsection 3.5 in \cite{a} we obtain the following counterexample.

\begin{ex}
\label{insure}
There is a compact metric space exhibiting a singular-expansive flow with infinitely many periodic orbits of period $2\pi$.
\end{ex}

\begin{proof}
Define
$$
X=\{(0,0)\}\cup \bigcup_{n\in\mathbb{N}} \{z\in \mathbb{R}^2:\|z\|=e^{-n}\}.
$$
We have that $X$ is a compact metric space if equipped with the Euclidean metric.
Consider the flow
$\phi$ on $X$ obtained by restricting (to $X$) the flow of the vector field in $\mathbb {R}^2$ defined by $V(x,y)=(-y,x)$.
Proposition 3.21 in \cite{a} implies that $V$ is rescaling expansive on $X$. Since $DV(0,0)$ is invertible, we conclude from Theorem \ref{co1} that $\phi$ singular-expansive.
On the other hand, by direct integration we obtain
$$
\phi_t(z)=(-y\sin t+x\cos t, x\sin t+y\cos y),\quad\quad \forall z=(x,y)\in X, \quad t\in \mathbb{R}.
$$
Then, $X\setminus Sing(\phi)=X\setminus \{(0,0)\}$ consists of infinitely many periodic orbits of period $2\pi$.
\end{proof}

On the other hand, we say that $\phi$ is {\em equicontinuous} when for every $\epsilon>0$ there is $\delta>0$ such that if $x,y\in X$ and $d(x,y)\leq\delta$, then $d(\phi_t(x),\phi_t(y))\leq\epsilon$ for every $t\in\mathbb{R}$.
The equicontinuous flows have been widely studied in the literature \cite{au}. Their singular version are given below.

\begin{definition}
A flow $\phi$ of a metric space $X$ is {\em singular-equicontinuous} if for every $\epsilon>0$ there is
$\delta>0$ such that if
$x,y\in X$ and $d(x,y)\leq \delta dist(x,Sing(\phi))$, then
$d(\phi_t(x),\phi_t(y))\leq \epsilon$ for every
$t\in\mathbb{R}$.
\end{definition}

Every equicontinuous flow $\phi$ on a compact metric space is singular-equicontinuous.
Indeed,
take $\epsilon>0$ and let $\delta>0$ be given by the equicontinuity of $\phi$.
If $\delta'=\frac{\delta}{diam(X)}$ and
$d(x,y)\leq \delta'dist(x,Sing(\phi))$, then $d(x,y)\leq \delta$ and so $d(\phi_t(x),\phi_t(y))\leq \epsilon$ for every $t\in \mathbb{R}$. Therefore, $\phi$ is singular-equicontinuous.

The converse is true if there are no singularities. More precisely, every singular-equicontinuous flow without singularities on a compact metric space is equicontinuous.
It follows that
flows which are both singular-expansive and singular-equicontinuous do exist (e.g. the trivial flow).
A different example will be reported below.

First recall that if $\delta,T>0$ a $(\delta,T)$-pseudo orbit of a flow $\phi$
is a sequence $(x_i,t_i)_{i\in \mathbb{Z}}$ formed by points $x_i\in X$ and times $t_i\in\mathbb{R}$ such that
$t_i\geq T$ and $d(\phi_{t_i}(x_i),x_{i+1})\leq \delta$
for every $i\in\mathbb{Z}$.
Given $\epsilon>0$ we define
$Rep(\epsilon)$ as the set of
increasing homeomorphism $s:\mathbb{R}\to \mathbb{R}$ such that
$$
\left|\frac{s(t)-s(r)}{t-r}-1\right|\leq \epsilon\quad (\forall t\neq r).
$$
We say that $(x_i,t_i)_{i\in\mathbb{Z}}$
can be {\em $\epsilon$-shadowed} if there are $x\in X$ and
$s\in Rep(\epsilon)$ such that
$$
d(\phi_{s(t)}(x),\phi_{t-S(i)}(x_i))\leq \epsilon,
\quad\quad\forall i\in\mathbb{Z},\quad \forall S(i)\leq t<S(i+1)
$$
where
$$
S(i) = \left\{\begin{matrix}
t_0+t_1+\cdots t_{i-1} &\hbox{ if } i>0,\\
0 &\hbox{ if } i=0,\\
-t_1-t_2-\cdots-t_i &\hbox{ if } i< 0.
\end{matrix}\right.
$$
Given $\Lambda\subset X$ we say that $\phi$ {\em has the shadowing property on $\Lambda$} if for every $\epsilon>0$ there is $\delta>0$ such that
every $(\delta,1)$-pseudo orbit $(x_i,t_i)_{i\in\mathbb{Z}}$ {\em on $\Lambda$}
(i.e. $x_i\in\Lambda$ for $i\in\mathbb{Z}$)
can be $\epsilon$-shadowed.
If $\phi$ has the shadowing property on $X$ we just say that $\phi$ has the shadowing property.
This definition was abbreviated to SPOTP
(strong pseudo-orbit tracing property) in Komuro \cite{k2}.
Examples of flows with this property are the Anosov ones \cite{pm}.

\begin{thm}
\label{thank}
There is a compact metric space exhibiting a flow
with the shadowing property which is singular-expansive, singular-equicontinuous but not equicontinuous.
\end{thm}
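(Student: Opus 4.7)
The plan is to exhibit an explicit compact metric space $X$ and flow $\phi$ verifying the four properties simultaneously. The overall strategy is inspired by Example \ref{insure}: take $X$ to be a union of invariant circles accumulating on a singular set, and define $\phi$ as a rotation at a different speed on each circle. The geometric layout is chosen so that pairs of points on different circles satisfy $d(x,y)/dist(x,Sing(\phi))\ge 1/2$, which makes the hypotheses of singular-equicontinuity and singular-expansivity vacuous in the cross-circle cases. On the same circle the flow is a rigid rotation (an isometry), so these two properties follow by the same rotation computation used in the proof of Example \ref{insure}.

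Concretely, I would take $X=\{0\}\cup\{|z|=1\}\cup\bigcup_{n\ge 1}\{|z|=1+2^{-n}\}\subset\mathbb{R}^2$ with the Euclidean metric, and define $\phi$ to fix every point of $Sing(\phi)=\{0\}\cup\{|z|=1\}$ and to rotate the $n$-th outer circle $C_n=\{|z|=1+2^{-n}\}$ with angular speed $\omega_n$, where the sequence $(\omega_n)$ satisfies $\omega_n\to 0$ (needed for continuity at $\{|z|=1\}$) and $\omega_n\ne\omega_m$ for $n\ne m$. For $x\in C_n$, one has $dist(x,Sing(\phi))=2^{-n}$ while the distance between distinct components of $X\setminus Sing(\phi)$ is at least $2^{-n-1}$, which yields the geometric spacing claimed above, and hence singular-equicontinuity and singular-expansivity for $\delta<1/2$. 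Non-equicontinuity is verified by taking $x\in C_n$ and $y\in C_{n+1}$ at the same angle: their absolute distance $2^{-n-1}$ is arbitrarily small with large $n$, while at time $t=\pi/|\omega_n-\omega_{n+1}|$ the angular discrepancy reaches $\pi$ and $d(\phi_t(x),\phi_t(y))\ge 2$.

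The main obstacle is verifying the shadowing property. No true orbit of $\phi$ visits more than one outer circle, so a pseudo orbit that genuinely jumps between distinct circles cannot be shadowed by an orbit on a single circle. The key observation is that a $(\delta,1)$-pseudo orbit can only switch between circles $C_n,C_m$ with $|2^{-n}-2^{-m}|\le\delta$, which forces both indices to lie in a thin neighborhood of $\{|z|=1\}$ where the rotation speeds are very small. My plan is to shadow such ``jumping'' pseudo orbits by a carefully chosen point of $\{|z|=1\}$, using the decay of $(\omega_n)$ to ensure that the pseudo orbit is almost stationary spatially; the reparametrization freedom $s\in Rep(\epsilon)$ will absorb minor angular mismatches, while pseudo orbits that stay on a single outer circle are trivially shadowed by the periodic orbit there. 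I expect the hard step to be controlling the cumulative angular drift of the pseudo orbit over all of $\mathbb{R}$: this will dictate a precise choice of $(\omega_n)$ (for instance $\omega_n=2^{-n}$) so that the total drift incurred by the pseudo orbit on circles of index $\ge N$ is bounded by a constant times $\omega_N\to 0$, making the shadow uniformly $\epsilon$-close to the pseudo orbit.
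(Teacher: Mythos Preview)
Your construction verifies singular-equicontinuity, non-equicontinuity, and (once one fixes $\omega_n=2^{-n}$, which is needed and not just convenient) singular-expansivity. The fatal gap is the shadowing property: the proposed flow does \emph{not} have it, and no choice of decaying speeds $\omega_n$ will fix this.

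Here is a concrete obstruction. Given any $\delta>0$, pick $n$ with $2^{-n}\le\delta$, a point $p$ on the unit circle, and $x_0\in C_n$ with $d(p,x_0)\le\delta$. Define the $(\delta,1)$-pseudo orbit $(x_i,1)_{i\in\mathbb{Z}}$ by $x_i=p$ for $i<0$ and $x_{i+1}=\phi_1(x_i)$ for $i\ge0$; all jump conditions are satisfied. For $t\ge0$ this pseudo orbit is a genuine periodic orbit on $C_n$ and sweeps through \emph{every} angle infinitely often, while for $t<0$ it sits at the fixed point $p$. Now try to $\epsilon$-shadow with $\epsilon<1$. If the shadowing point $x^*$ lies on some circle $C_m$, then since every $s\in Rep(\epsilon)$ satisfies $s(t)\to-\infty$ as $t\to-\infty$, the shadow $\phi_{s(t)}(x^*)$ also sweeps through all angles on $C_m$ for negative $t$; when it reaches the angle opposite to $p$ the distance to $p$ exceeds $2$. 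If instead $x^*\in Sing(\phi)$, the shadow is constant, and for large positive $t$ the pseudo orbit reaches the point of $C_n$ antipodal to $x^*$, again at distance greater than $2$. Either way the pseudo orbit cannot be $\epsilon$-shadowed. Your heuristic that a jumping pseudo orbit is ``almost stationary'' breaks down precisely because, once on $C_n$, the pseudo orbit can legitimately stay there for arbitrarily long times and accumulate unbounded angular drift; the reparametrization class $Rep(\epsilon)$ cannot freeze a genuine rotation.

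The paper avoids this difficulty by taking a completely different example: the interval $X=[0,1]$ with the flow $\phi_t(x)=xe^{t}/(1+x(e^{t}-1))$. This is a Morse-Smale flow with $Sing(\phi)=\{0,1\}$, so the shadowing property is classical, and non-equicontinuity is immediate. The remaining two properties are obtained not by a circle argument but via Lemma~\ref{bariloche}: one checks directly that for every $\epsilon>0$ there is $\delta>0$ with $B[x,\delta\,dist(x,Sing(\phi))]\subset\phi_{[-\epsilon,\epsilon]}(x)$, which simultaneously yields singular-expansivity and singular-equicontinuity. If you want to salvage your circle idea, you would need each $C_n$ to be a piece of a genuine orbit that actually leaves a neighbourhood of the singular set (so that the half-stationary pseudo orbit above becomes illegal); but then the resulting flow is essentially a gradient-like flow, and the interval example is the simplest instance of that.
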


An interesting property of the equicontinuous flows is that they have zero topological entropy.
Recall the topological entropy of a flow $\phi$ of a compact metric space $X$ defined by $h(\phi)=h(\phi,X)$ where
$$
h(\phi,K)=\lim_{\epsilon\to0}\limsup_{t\to\infty}\frac{1}t\ln r(t,\epsilon)
$$ for $K\subset X$ compact,
and $r(t,\epsilon)$ is the minimal cardinality of $F\subset K$
satisfying
$$
K\subset \bigcup_{x\in F}\{y\in X:d(\phi_s(x),\phi_s(y))\leq \epsilon, \forall 0\leq s\leq t\}
$$
($F$ is then called $(t,\epsilon)$-spanning set).
We would like to prove the same property for the singular-equicontinuous flows. Instead, we will consider
the {\em Bowen entropy} \cite{b1} of the nonsingular points namely
$$
h^*(\phi)=\sup\{h(\phi,K):K\subset X\setminus Sing(\phi)\mbox{ is compact}\}
$$
In general $h^*(\phi)\leq h(\phi)$ hence $h^*(\phi)=0$ if $\phi$ is equicontinuous.
Since $\phi$ restricted to $Sing(\phi)$ is equicontinuous, $h(\phi,Sing(\phi))=0$
we expect $h^*(\phi)=h(\phi)$.
These facts motivate the result below.

\begin{thm}
\label{gasolina}
Let $\phi$ be a flow of compact metric spaces.

\begin{enumerate}
\item
If $\Omega(\phi)\setminus Sing(\phi)$ is closed, then $h^*(\phi)=h(\phi)$.

\medskip

\item
If $\phi$ is singular-equicontinuous, then $h^*(\phi)=0$.
\end{enumerate}
\end{thm}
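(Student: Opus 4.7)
For part (1), my plan is to invoke the well-known fact that for a continuous flow on a compact metric space the topological entropy concentrates on the nonwandering set, i.e.\ $h(\phi)=h(\phi,\Omega(\phi))$. This can be deduced from the variational principle: every $\phi$-invariant probability measure is supported on $\Omega(\phi)$, and via the variational principle for the restriction one has $h(\phi)=\sup_\mu h_\mu(\phi)\leq h(\phi,\Omega(\phi))$, the reverse inequality being trivial. Since $Sing(\phi)\subset \Omega(\phi)$ and, by hypothesis, $\Omega(\phi)\setminus Sing(\phi)$ is closed, I get a decomposition $\Omega(\phi)=(\Omega(\phi)\setminus Sing(\phi))\cup Sing(\phi)$ as a union of two compact sets. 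I would then apply the elementary identity $h(\phi,A\cup B)=\max\{h(\phi,A),h(\phi,B)\}$ for compact $A,B$ (a union of $(t,\epsilon)$-spanning sets spans the union, and $r(t,\epsilon)\leq 2\max\{r_A,r_B\}$), together with $h(\phi,Sing(\phi))=0$, which is immediate because $\phi_t$ is the identity on $Sing(\phi)$, so any fixed finite $\epsilon$-net of $Sing(\phi)$ is $(t,\epsilon)$-spanning for every $t>0$. This yields $h(\phi)=h(\phi,\Omega(\phi)\setminus Sing(\phi))\leq h^*(\phi)$, and combined with the obvious inequality $h^*(\phi)\leq h(\phi)$ one obtains equality.

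For part (2), I would fix an arbitrary compact set $K\subset X\setminus Sing(\phi)$ and show $h(\phi,K)=0$. The key quantitative input is that $K$ is disjoint from the closed set $Sing(\phi)$ and both are compact, so $\eta:=dist(K,Sing(\phi))>0$. Given $\epsilon>0$, singular-equicontinuity provides $\delta>0$ such that $d(x,y)\leq \delta\cdot dist(x,Sing(\phi))$ implies $d(\phi_t(x),\phi_t(y))\leq\epsilon$ for all $t\in\mathbb{R}$. For $x\in K$ we have $dist(x,Sing(\phi))\geq\eta$, so the simpler condition $d(x,y)\leq \delta\eta$ already yields $d(\phi_t(x),\phi_t(y))\leq\epsilon$ for every $t\in\mathbb{R}$. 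Now I would cover $K$ by finitely many balls of radius $\delta\eta$ centered at points of $K$, obtaining a finite set $F\subset K$ whose cardinality depends only on $\epsilon$ and which is $(t,\epsilon)$-spanning for $K$ for every $t>0$. Consequently $r(t,\epsilon)$ is bounded uniformly in $t$, giving $\limsup_{t\to\infty}\frac1t\ln r(t,\epsilon)=0$; letting $\epsilon\to 0$ yields $h(\phi,K)=0$, and taking the supremum over $K$ proves $h^*(\phi)=0$.

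I do not anticipate a serious obstacle. Part (2) is essentially a routine reduction: singular-equicontinuity, which is nonuniform near singularities, becomes a uniform equicontinuity modulus once restricted to any compact set bounded away from $Sing(\phi)$, and uniform equicontinuity kills entropy. The only delicate ingredient in part (1) is the identity $h(\phi)=h(\phi,\Omega(\phi))$; if I wish to avoid citing the variational principle for flows directly, I can invoke its standard proof via equidistribution of empirical measures along orbits, noting that wandering orbits contribute nothing to any invariant measure and hence nothing to entropy.
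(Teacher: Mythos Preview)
Your proposal is correct and follows essentially the same route as the paper: for (1) you reduce to $h(\phi)=h(\phi,\Omega(\phi))$, split $\Omega(\phi)$ into the closed set $\Omega(\phi)\setminus Sing(\phi)$ and $Sing(\phi)$, and use the max formula together with $h(\phi,Sing(\phi))=0$; for (2) you produce a single finite $(t,\epsilon)$-spanning set valid for all $t$ via singular-equicontinuity. The only cosmetic differences are that the paper cites Bowen directly for $h(\phi)=h(\phi,\Omega(\phi))$ rather than the variational principle, and covers $K$ by the variable-radius balls $B[x,\delta\,dist(x,Sing(\phi))]$ instead of passing to the uniform radius $\delta\eta$.
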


\section{Proof of theorems \ref{thAA} to \ref{gasolina}}
\label{sec4}

\noindent
The next lemma is closely related to Lemma 2.2 in \cite{a}.

\begin{lemma}
\label{cons}
Let $\phi$ be a flow of a compact metric space $X$. If $Sing(\phi)$ is dynamically isolated, then there is $\beta_0>0$ such that $diam(\phi_{\mathbb{R}}(x))\geq\beta_0$ for every $x\in X\setminus Sing(\phi)$.
\end{lemma}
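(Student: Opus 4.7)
My plan is to argue by contradiction. Suppose no such $\beta_0$ exists; then there is a sequence $x_n\in X\setminus Sing(\phi)$ with $\operatorname{diam}(\phi_{\mathbb{R}}(x_n))\to 0$. By compactness of $X$ I pass to a subsequence and assume $x_n\to p$ for some $p\in X$. The goal is to show first that $p$ must be a singularity, and then to use the isolated-block hypothesis to force each $x_n$ itself to lie in $Sing(\phi)$, contradicting our assumption.

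For the first step, I observe that for every fixed $t\in\mathbb{R}$,
\[
d(\phi_t(x_n),p)\le d(\phi_t(x_n),x_n)+d(x_n,p)\le \operatorname{diam}(\phi_{\mathbb{R}}(x_n))+d(x_n,p)\xrightarrow[n\to\infty]{}0.
\]
On the other hand, continuity of the flow gives $\phi_t(x_n)\to \phi_t(p)$. Comparing the two limits yields $\phi_t(p)=p$ for every $t$, so $p\in Sing(\phi)$.

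For the second step I invoke the dynamical isolation hypothesis. Let $U$ be an isolated block of $Sing(\phi)$, i.e. an open neighborhood with $\bigcap_{t\in\mathbb{R}}\phi_t(U)=Sing(\phi)$. Since $p\in Sing(\phi)\subset U$ and $U$ is open, pick $r>0$ with $\overline{B_r(p)}\subset U$. For all large enough $n$ we have $d(x_n,p)<r/2$ and $\operatorname{diam}(\phi_{\mathbb{R}}(x_n))<r/2$, so the entire orbit satisfies $\phi_{\mathbb{R}}(x_n)\subset B_r(p)\subset U$. This means $\phi_{-t}(x_n)\in U$ for every $t\in\mathbb{R}$, equivalently $x_n\in \phi_t(U)$ for every $t$. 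Therefore $x_n\in \bigcap_{t\in\mathbb{R}}\phi_t(U)=Sing(\phi)$, contradicting $x_n\in X\setminus Sing(\phi)$.

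I do not anticipate a genuine obstacle here; both steps are short and use only compactness, continuity of the flow, and the definition of isolated block. The only point to double check is the elementary equivalence between $\phi_{\mathbb{R}}(x_n)\subset U$ and $x_n\in\bigcap_{t}\phi_t(U)$, which is automatic from $\phi_t(U)=\{y:\phi_{-t}(y)\in U\}$. Note also that Example \ref{insure} (where concentric circles accumulate on the singularity $(0,0)$) shows the conclusion fails without the dynamical isolation hypothesis, confirming that this assumption is essential and explaining why it is used in precisely this way.
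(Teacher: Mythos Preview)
Your proof is correct and follows essentially the same approach as the paper's: argue by contradiction, pass to a convergent subsequence, and use the isolating block to force $x_n\in Sing(\phi)$. The only difference is that you spell out explicitly why the limit point $p$ must be a singularity (via $\phi_t(x_n)\to p$ and $\phi_t(x_n)\to\phi_t(p)$), whereas the paper simply asserts this as a consequence of $\operatorname{diam}(\phi_{\mathbb{R}}(x_n))\to 0$ and compactness.
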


\begin{proof}
Otherwise, there is a sequence $x_n\in X\setminus Sing(\phi)$
such that $diam(\phi_{\mathbb{R}}(x_n))\to0$ as $n\to\infty$.
Let $U$ be an isolating block of $Sing(\phi)$.
Since $diam(\phi_{\mathbb{R}}(x_n))\to0$ and $X$ is compact, we can assume that $x_n\to \sigma$ for some $\sigma\in Sing(\phi)$.
Then, $\phi_{\mathbb{R}}(x_n)\subset U$ for $n$ large so
$x_n\in \bigcap_{t\in\mathbb{R}}\phi_t(U)=Sing(\phi)$ for $n$ large which is absurd. This completes the proof.
\end{proof}

The next lemma proves that the set of singularities of a singular-expansive flow is dynamically isolated.

\begin{lemma}
\label{mcm}
Let $\phi$ be a flow of a compact metric space $X$.
If $\phi$ is expansive on $X\setminus Sing(\phi)$, then $Sing(\phi)$ is dynamically isolated.	
\end{lemma}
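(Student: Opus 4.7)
I would argue by contradiction. Assume $Sing(\phi)$ is not dynamically isolated. Setting $U_n = \{x \in X : \mathrm{dist}(x, Sing(\phi)) < 1/n\}$, for each $n$ there exists $x_n \in X \setminus Sing(\phi)$ with $\phi_{\mathbb{R}}(x_n) \subset U_n$ (otherwise $U_n$ would be an isolating block). By compactness of $X$, after passing to a subsequence we may assume $x_n \to \sigma$ for some $\sigma \in Sing(\phi)$.

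Now fix $\epsilon > 0$ and let $\delta > 0$ be the expansivity constant of $\phi$ on $X \setminus Sing(\phi)$ corresponding to $\epsilon$. The plan is to test the expansivity hypothesis on the pair $(x_n, x_m)$ for $n, m$ large, using the continuous map $s \equiv 0$ (which fixes $0$). The required estimate then reduces to
$$
d(\phi_t(x_n), x_m) \leq \delta \quad \text{for every } t \in \mathbb{R}.
$$
This bound will hold once $x_m$ is close enough to $\sigma$ and the entire orbit $\phi_{\mathbb{R}}(x_n)$ is trapped in a small neighborhood of $\sigma$. Expansivity then forces $x_m \in \phi_{[-\epsilon,\epsilon]}(x_n)$, placing $x_m$ on the orbit of $x_n$.

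A dichotomy finishes the proof. If infinitely many $x_n$ lie on pairwise distinct orbits, the previous step yields the contradiction directly. Otherwise, a subsequence of $x_n$ lies on a single orbit $O$, and then $O = \phi_{\mathbb{R}}(x_n) \subset U_n$ for arbitrarily large $n$ forces $O \subset \bigcap_n U_n = Sing(\phi)$, which contradicts $x_n \notin Sing(\phi)$.

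The principal obstacle is the localization step: the condition $\phi_{\mathbb{R}}(x_n) \subset U_n$ by itself only says the orbit stays near \emph{some} singularity, not near $\sigma$ specifically, and if a component of $Sing(\phi)$ has positive diameter the orbit could in principle traverse it. I would handle this by replacing $U_n$ with the connected component of $B(Sing(\phi), 1/n)$ that contains $\sigma$, and using that each orbit $\phi_{\mathbb{R}}(x_n)$ is connected as a continuous image of $\mathbb{R}$, so once $x_n$ is close enough to $\sigma$ the whole orbit is trapped in that component. A careful choice of $\sigma$ as an accumulation point of the non-isolation, combined with Hausdorff convergence of these components to the relevant part of $Sing(\phi)$, then supplies the uniform closeness that the expansivity test requires.
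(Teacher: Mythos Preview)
Your contradiction setup is natural, and the second branch of your dichotomy (a subsequence on a single orbit) works exactly as you say. The gap is in the first branch, precisely at the localization step you flag. With $s\equiv 0$ the expansivity test reads $d(\phi_t(x_n),x_m)\leq\delta$ for all $t$, which forces $\operatorname{diam}\phi_{\mathbb{R}}(x_n)\leq 2\delta$. But $\phi_{\mathbb{R}}(x_n)\subset U_n$ only says the orbit stays $1/n$-close to $Sing(\phi)$, and if the connected component $K$ of $Sing(\phi)$ containing $\sigma$ has positive diameter, the orbit can trace along $K$ and keep diameter at least $\operatorname{diam}K$ for every $n$. Passing to the connected component of $U_n$ containing $\sigma$ does not help: that component contains all of $K$ and Hausdorff-converges to $K$, not to a point, so the ``uniform closeness'' you hope for is simply absent. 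The appeal to ``a careful choice of $\sigma$'' and ``Hausdorff convergence of these components'' does not produce the missing smallness; it is a genuine gap, not a detail.

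The paper's argument avoids this entirely by never comparing points on different orbits. The key observation is that the flow is \emph{slow} near singularities: by uniform continuity of $\phi$ on $[0,2]\times X$, for each $\delta>0$ there is a neighborhood $U_\delta$ of $Sing(\phi)$ with $d(\phi_c(z),z)<\delta$ for every $z\in U_\delta$ and $c\in[0,2]$. Given a nonsingular $x$ with full orbit in $U_\delta$, one takes $y=\phi_2(x)$ (or $y=\phi_{2\epsilon_0}(x)$ in a periodic sub-case) and the identity reparametrization; then $d(\phi_t(x),\phi_t(y))=d(\phi_t(x),\phi_2(\phi_t(x)))<\delta$ for all $t$, independently of the diameter of the orbit. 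Expansivity forces $y\in\phi_{[-\epsilon_0,\epsilon_0]}(x)$, contradicting the choice of $y$ once $x$ is arranged to be non-periodic, or periodic of period exceeding $2\epsilon_0$. This same-orbit time-shift is the idea your proposal is missing; it replaces the uncontrollable ``closeness to a fixed point $x_m$'' by the controllable ``closeness of $z$ to $\phi_c(z)$'' for $z$ near $Sing(\phi)$.
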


\begin{proof}
Suppose that $Sing(\phi)$ is not dynamically isolated.
We have three cases to be considered:

\begin{enumerate}
\item There is a sequence of periodic orbits $\{\gamma_1,\gamma_2,\cdots,\gamma_n,\cdots\}$ accumulating on $Sing(\phi)$ whose periods are bounded away from $0$.
\item There is a sequence of periodic orbits $\{\gamma_1,\gamma_2,\cdots,\gamma_n,\cdots\}$ accumulating on $Sing(\phi)$ whose periods tent to $0$ as $n\to\infty$.
\item There is a neighborhood $U_0$ of $Sing(\phi)$ such that no periodic orbit of $\phi$ is entirely contained in $U_0$.	
\end{enumerate}

In Case (1) we take $\epsilon_0=\min\{\frac{\tau}4,1\}$ where $\tau$ is a positive lower bound of the periods of the periodic orbits $\gamma_n$ ($n=1,2,\cdots$).
Fix $\delta>0$. For this $\delta$ there exists a neighborhood $U_\delta$ of $Sing(\phi)$
such that $d(\phi_t(z),z)<\delta$ for every $z\in U_\delta$ and every $t\in [0,2]$.
Since $\gamma_n$ accumulates on $Sing(\phi)$, one has $\gamma_n\subset U_\delta$ for some $n\in\mathbb{N}$. 
Take $z\in \gamma_n$ and $y=\phi_{2\epsilon_0}(z)$.
It follows from the choice of $\epsilon_0$ that $2\epsilon_0\in [0,2]$. Then, $d(\phi_t(z),\phi_t(y))=d(\phi_t(z),\phi_{2\epsilon_0}(\phi_t(z)))<\delta$ for every $t\in\mathbb{R}$. Again the choice of $\epsilon_0$ implies $2\epsilon_0<\tau$ hence $y\notin \phi_{[-\epsilon_0,\epsilon_0]}(z)$.

In Case (2) we take $\epsilon_0=1$.
In this case have that $\gamma_n\to\sigma$ for some $\sigma\in Sing(\phi)$.
Given $\delta>0$ we can choose two different periodic orbits
$\gamma_n$ and $\gamma_m$ with $d_H(\gamma_n,\gamma_m)<\delta$ ($d_H$ here is the Hausdorff distance).
Taking $z\in \gamma_n$ and $y\in\gamma_m$ we have $d(\phi_t(z),\phi_t(y))<\delta$ for every $t\in\mathbb{R}$. Since $\gamma_n$ and $\gamma_m$ are different orbits, we also have $y\notin \phi_{[-\epsilon_0,\epsilon_0]}(x)$.

In Case (3) we take $\epsilon_0=1$ once more.
Take $\delta>0$. As before there is a neighborhood $U_\delta$ of $Sing(\phi)$
such that $d(\phi_t(z),z)<\delta$ for every $z\in U_\delta$ and every $t\in [0,2]$.
Since $Sing(\phi)$ is not dynamically isolated,
there is $x\in U\setminus Sing(\phi)$ such that the full orbit of $x$ contained in $U_\delta$. The assumption in Case (3) implies that the orbit of $x$ is not periodic.
Take $y=\phi_2(x)$. Then, $d(\phi_t(x),\phi_t(y))<\delta$ for every $t\in \mathbb{R}$. Since $x$ is not periodic, we have $y\notin \phi_{[-1,1]}(x)$.

Taking $h$ as the identity of $\mathbb{R}$ in all these cases we obtain that $\phi$ is not expansive on $X\setminus Sing(\phi)$. This completes the proof.
\end{proof}

We also need the following lemma.

\begin{lemma}
\label{thB}
The properties below hold for every flow
$\phi$ of a compact metric space $X$:
\begin{enumerate}
\item
If $\phi$ is expansive on $X\setminus Sing(\phi)$, then $\phi$ is singular-expansive.

\medskip

\item
If $\phi$ is singular-expansive and $Sing(\phi)$ is open, then $\phi$ is expansive on $X\setminus Sing(\phi)$.
\end{enumerate}
\end{lemma}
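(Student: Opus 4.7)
For part (1), my plan is to reduce the singular-expansive hypothesis directly to the expansive hypothesis using the universal bound $dist(z,Sing(\phi))\leq diam(X)$. Given $\epsilon>0$, I would pick $\delta'>0$ from the expansivity of $\phi$ on $X\setminus Sing(\phi)$ and set $\delta=\min\{\delta'/diam(X),\ 1/2\}$. The argument then splits according to whether $x$ or $y$ lies in $Sing(\phi)$. If $x\in Sing(\phi)$, then $dist(\phi_t(x),Sing(\phi))=0$ forces $\phi_{s(t)}(y)\equiv x$; since $s$ is an increasing homeomorphism of $\mathbb{R}$, this gives $y=x$ and the conclusion holds with $t_0=0$. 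If $y\in Sing(\phi)$ but $x\notin Sing(\phi)$, the inequality $dist(\phi_t(x),Sing(\phi))\leq d(\phi_t(x),y)$ combined with $\delta<1$ yields $d(\phi_t(x),y)=0$ for all $t$, forcing $x\in Sing(\phi)$, a contradiction. In the remaining case $x,y\in X\setminus Sing(\phi)$, the choice of $\delta$ gives $d(\phi_t(x),\phi_{s(t)}(y))\leq\delta'$ for all $t$. I would then reparametrize by $\tilde s(t)=s(t)-s(0)$ and $\tilde y=\phi_{s(0)}(y)\in X\setminus Sing(\phi)$ to produce a continuous map fixing $0$, apply expansivity to obtain $\tilde y\in\phi_{[-\epsilon,\epsilon]}(x)$, and finally take $t_0=0$ in the singular-expansive conclusion.

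For part (2), the plan is to reduce to Oka's theorem \cite{o} on the equivalence of k*-expansivity and expansivity for nonsingular flows on compact metric spaces. Since $Sing(\phi)$ is closed (for any continuous flow) and open (by hypothesis), the set $Y:=X\setminus Sing(\phi)$ is clopen in $X$, hence compact and invariant, and orbits of nonsingular points never reach $Sing(\phi)$ so that $\phi|_Y$ is a nonsingular flow on $Y$. Moreover the two disjoint closed sets $Y$ and $Sing(\phi)$ in compact $X$ are at a positive distance $\beta:=dist(Y,Sing(\phi))>0$ (the degenerate case $Sing(\phi)=\emptyset$ reduces to proving expansivity on $X$ and can be handled by convention). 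Given $\epsilon>0$, let $\delta_1$ be supplied by singular-expansivity of $\phi$ with parameter $\epsilon$, and set $\delta:=\delta_1\beta$. For $x,y\in Y$ and an increasing homeomorphism $s$ fixing $0$ with $d(\phi_t(x),\phi_{s(t)}(y))\leq\delta$ for all $t$, the uniform lower bound $dist(\phi_t(x),Sing(\phi))\geq\beta$ gives $d(\phi_t(x),\phi_{s(t)}(y))\leq\delta_1\,dist(\phi_t(x),Sing(\phi))$, so singular-expansivity produces $t_0$ with $\phi_{s(t_0)}(y)\in\phi_{[t_0-\epsilon,t_0+\epsilon]}(x)$. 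This is exactly k*-expansivity of $\phi|_Y$; Oka's theorem then upgrades it to expansivity, which is what we wanted.

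The step I expect to be the main obstacle is precisely the final upgrade from the k*-expansive conclusion $\phi_{s(t_0)}(y)\in\phi_{[t_0-\epsilon,t_0+\epsilon]}(x)$ to the Bowen–Walters conclusion $y\in\phi_{[-\epsilon,\epsilon]}(x)$, because the latter requires one to control the orbital offset $\sigma$ in the representation $y=\phi_\sigma(x)$ rather than merely to know that $y$ lies on the orbit of $x$. Invoking Oka's theorem bypasses this analysis cleanly by exploiting the uniform flow-box structure on the singularity-free compact invariant set $Y$. Without that appeal, one would instead need a direct flow-box argument to show that the continuous $s$ fixing $0$ appearing in the expansivity definition must be an increasing homeomorphism for sufficiently small $\delta$, and to rule out small-scale orbital returns in $Y$ so that the offset $\sigma$ is forced into $[-\epsilon,\epsilon]$.
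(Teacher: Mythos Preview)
Your proposal is correct, and for Item~(2) it is essentially the paper's argument: both use the positive gap $\beta=c>0$ between the closed set $X\setminus Sing(\phi)$ and $Sing(\phi)$ to convert the $\delta$-hypothesis into the singular-expansive hypothesis, and both then appeal to an external equivalence on the nonsingular compact invariant set $Y=X\setminus Sing(\phi)$ to pass from the $k^*$-style conclusion to Bowen--Walters expansivity. You invoke Oka's theorem~\cite{o}; the paper invokes Item~(ii) of Theorem~1 in~\cite{bw}. You are right that this upgrade is the only nontrivial point, and you identify it cleanly.

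For Item~(1), however, your route is genuinely more elementary than the paper's. The paper first establishes (via Lemma~\ref{mcm}, a separate case-analysis argument) that expansivity on $X\setminus Sing(\phi)$ forces $Sing(\phi)$ to be dynamically isolated, and then uses an isolating block $U$ and a threshold $\Delta$ to treat the case $y\in Sing(\phi)$: if $d(\phi_t(x),y)\leq\Delta$ for all $t$, the whole orbit of $x$ lies in $U$ and hence in $Sing(\phi)$. Your observation bypasses all of this: since $y\in Sing(\phi)$ gives $dist(\phi_t(x),Sing(\phi))\leq d(\phi_t(x),y)$, the hypothesis with any $\delta<1$ already forces $d(\phi_t(x),y)=0$. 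This is a strict simplification---it removes the dependence on Lemma~\ref{mcm} entirely---while the remaining cases ($x\in Sing(\phi)$ and the generic reparametrization $\tilde s(t)=s(t)-s(0)$, $\tilde y=\phi_{s(0)}(y)$) match the paper's proof exactly.
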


\begin{proof}
First we prove Item (1).
By Lemma \ref{mcm} we have that $Sing(\phi)$ is dynamically isolated.
Take an isolating block $U$ of $Sing(\phi)$.
Fix $\Delta>0$ such that
\begin{equation}
\label{lavey}
\{z\in X: dist(z,Sing(\phi))\leq\Delta\}\subset U.
\end{equation}
Let $\epsilon>0$ and set $\epsilon'=\min\{\epsilon,\Delta\}$.
For this $\epsilon'$ we let $\delta'>0$ be given by the expansivity of $\phi$ on $X\setminus Sing(\phi)$.
Define
$\delta=\frac{\min\{\delta',\Delta\}}{diam(X)}$
and let $x,y\in X$ such that
$$
d(\phi_t(x),\phi_{s(t)}(y))\leq \delta dist(\phi_t(x),Sing(\phi))
$$
for every $t\in\mathbb{R}$ and some increasing homeomorphism $s:\mathbb{R}\to \mathbb{R}$.
Define $\hat{s}(t)=s(t)-s(0)$ for $t\in\mathbb{R}$  and $\hat{y}=\phi_{s(0)}(y)$.
Then, $\hat{s}:\mathbb{R}\to \mathbb{R}$ is a continuous map fixing $0$ such that
$$
d(\phi_t(x),\phi_{\hat{s}(t)}(\hat{y}))\leq \delta dist(\phi_t(x),Sing(\phi)),\quad\quad\forall t\in\mathbb{R}.
$$
If $x\in Sing(\phi)$, $\phi_t(x)=x\in Sing(\phi)$ for $t\in\mathbb{R}$ hence $\hat{y}=x$ and so $\phi_{s(0)}(y)=\phi_t(x)$ with $t=0\in [-\epsilon,\epsilon]$.

On the other hand,
$$
\delta dist(z,Sing(\phi))=\min\{\delta',\Delta\}\frac{dist(z,Sing(\phi))}{diam(X)}\leq \min\{\delta',\Delta\},
\quad\quad\forall z\in X.
$$
Therefore,
$$
d(\phi_t(x),\phi_{\hat{s}(t)}(\hat{y}))\leq\min\{\delta',\Delta\},\quad\quad\forall t\in\mathbb{R}.
$$
If $y\in Sing(\phi)$, then $d(\phi_t(x),y)\leq \Delta$ for every $t\in\mathbb{R}$.
This and the inclusion (\ref{lavey}) imply $x\in \bigcap_{t\in\mathbb{R}}\phi_t(U)=Sing(\phi)$ and then $\phi_{s(0)}(y)=\phi_t(x)$ with $t\in[-\epsilon,\epsilon]$ as before.
Therefore, we can assume $x,y\in X\setminus Sing(\phi)$.
Since
$$
d(\phi_t(x),\phi_{\hat{s}(t)}(\hat{y}))\leq \min\{\delta',\Delta\}\leq \delta',\quad\quad\forall t\in \mathbb{R},
$$
we conclude from the expansivity on $X\setminus Sing(\phi)$ that
$\phi_{s(0)}(y)=\hat{y}=\phi_t(x)$ for some $t\in[-\epsilon,\epsilon]$ proving Item (1).

To prove Item (2) we assume that $\phi$ is singular-expansive and that $Sing(\phi)$ is open.
Then, $X\setminus Sing(\phi)$ is closed and $\phi$ has no singularities there so by Item (ii) of Theorem 1 in \cite{bw} we just need to consider increasing homeomorphisms fixing $0$ to prove the expansivity of $\phi$ on $X\setminus Sing(\phi)$.
Since $Sing(\phi)$ and $X\setminus Sing(\phi)$ are closed disjoint, there is $c>0$ such that
$
\inf\{dist(z,Sing(\phi)):z\in X\setminus Sing(\phi)\}\geq c.
$
Now, let $\epsilon>0$ and consider $\delta'$ from the singular-expansivity of $\phi$ for this $\epsilon$.
Define $\delta=\delta'c$ and
take $x,y\in X\setminus Sing(\phi)$ such that
$
d(\phi_t(x),\phi_{s(t)}(y))\leq \delta,
$
for every $t\in \mathbb{R}$ and some increasing homeomorphism
$s: \mathbb{R}\to \mathbb{R}$ fixing $0$.
Since $x\in X\setminus Sing(\phi)$ which is invariant, $\phi_t(x)\in X\setminus Sing(\phi)$ for every $t\in \mathbb{R}$.
Then, $c\leq dist(\phi_t(x),Sing(\phi))$ for every $t\in\mathbb{R}$ and then
$$
d(\phi_t(x),\phi_{s(t)}(y))\leq \delta=\delta' c\leq \delta'dist(\phi_t(x),Sing(\phi)),
\quad\quad\forall t\in\mathbb{R}.
$$
Therefore, $y=\phi_{s(0)}(y)=\phi_t(x)$ for some $t\in [-\epsilon,\epsilon]$ proving that $\phi$ is expansive on $X\setminus Sing(\phi)$.
This completes the proof.
\end{proof}

\begin{ex}
It is natural to ask if we can remove the hypothesis that $Sing(\phi)$ is open in Item (2) of Lemma \ref{thB}.
However, this is false and a counterexample is given by the geometric Lorenz attractor.
\end{ex}

We also need the result below below.

\begin{lemma}
\label{puestico}
Let $\phi$ be a singular-expansive flow of a compact metric space $X$. Then, $\phi$
is expansive on every nonsingular compact invariant set of $\phi$.
\end{lemma}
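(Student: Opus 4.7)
The plan is to deduce expansivity of $\phi$ on $K$ by first upgrading singular-expansivity to k*-expansivity on $K$, and then invoking Oka's equivalence \cite{o} between k*-expansivity and expansivity for nonsingular flows.

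First I would exploit the separation of $K$ from the singular set. Since $K$ is compact, $Sing(\phi)$ is closed (hence compact), and by hypothesis $K\cap Sing(\phi)=\emptyset$, there is a uniform positive constant
$$
c:=\inf_{z\in K}dist(z,Sing(\phi))>0.
$$
Invariance of $K$ then propagates this lower bound along orbits: $dist(\phi_t(x),Sing(\phi))\geq c$ for every $x\in K$ and every $t\in\mathbb{R}$.

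Next I would show that $\phi$ is k*-expansive on $K$. Given $\epsilon>0$, let $\delta'>0$ be furnished by the singular-expansivity of $\phi$. Put $\delta=\delta' c$ and suppose $x,y\in K$ together with an increasing homeomorphism $s:\mathbb{R}\to\mathbb{R}$ fixing $0$ satisfy $d(\phi_t(x),\phi_{s(t)}(y))\leq\delta$ for every $t\in\mathbb{R}$. Using the uniform lower bound on $dist(\phi_t(x),Sing(\phi))$, this converts to $d(\phi_t(x),\phi_{s(t)}(y))\leq\delta' dist(\phi_t(x),Sing(\phi))$ for every $t\in\mathbb{R}$. Singular-expansivity (applicable to $x,y\in X$, and in particular to $x,y\in K$, since $s$ fixing $0$ is a special increasing homeomorphism) then produces $t_0\in\mathbb{R}$ with $\phi_{s(t_0)}(y)\in\phi_{[t_0-\epsilon,t_0+\epsilon]}(x)$, which is exactly the k*-expansivity conclusion.

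Finally, because $K$ contains no singularities, the restricted flow $\phi|_K$ is a nonsingular flow on the compact metric space $K$. Applying Oka's theorem \cite{o} to $\phi|_K$ upgrades k*-expansivity on $K$ to expansivity on $K$, yielding the desired conclusion. I do not foresee a genuine obstacle; the essential content is simply that, on a nonsingular invariant compact subset, the rescaling factor $dist(\phi_t(x),Sing(\phi))$ appearing in the definition of singular-expansivity is bounded uniformly away from zero, reducing singular-expansivity on $K$ to k*-expansivity on $K$, from which Oka's equivalence closes the argument.
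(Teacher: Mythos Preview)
Your argument is correct. The key observation --- that on a nonsingular compact invariant set $K$ the rescaling factor $dist(\phi_t(x),Sing(\phi))$ is bounded below by a positive constant $c$, so the singular-expansivity hypothesis collapses to a uniform $\delta$-hypothesis --- is exactly the mechanism the paper uses as well.

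The packaging, however, differs. The paper first argues that the restricted flow $\phi|_\Lambda$ is itself singular-expansive (noting $Sing(\phi|_\Lambda)=\emptyset$, so $dist(\cdot,Sing(\phi|_\Lambda))=diam(X)$), and then appeals to its own Lemma~\ref{thB}(2), whose proof in turn rests on an equivalence from Bowen--Walters \cite{bw} for nonsingular flows. You instead go directly from singular-expansivity to k*-expansivity on $K$ and then invoke Oka's theorem \cite{o} to pass to expansivity. Your route is slightly more direct and makes the role of the ``nonsingular $\Rightarrow$ k*-expansive $=$ expansive'' step completely explicit; the paper's route has the advantage of staying within the lemmas already developed in the text. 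Substantively the two arguments are the same: positive separation from $Sing(\phi)$ plus a classical equivalence for nonsingular flows.
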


\begin{proof}
Let $\Lambda$ be a nonsingular compact invariant set of $\phi$.
We assert that $\phi|_\Lambda$ is singular-expansive.

Fix $\epsilon>0$ and let $\delta'$ be given by the singular-expansivity of $\phi$ for this $\epsilon$.
Since $\Lambda$ is compact and nonsingular, there exists $\delta'>0$ such that if
$a,b\in \Lambda$ and $d(a,b)\leq\delta'diam(X)$, then
$d(a,b)\leq\delta dist(a,Sing(\phi))$.

Now suppose that
$x,y\in \Lambda$ and $d(\phi_t(x),\phi_{s(t)}(y))\leq\delta'dist(\phi_t(x),Sing(\phi|_\Lambda))$ for every $t\in\mathbb{R}$ and
some increasing homeomorphism $s:\mathbb{R}\to \mathbb{R}$.
Since $\phi$ is nonsingular, $Sing(\phi|_\Lambda)=\emptyset$ hence
$dist(\phi_t(x),Sing(\phi|_\Lambda))=dist(\phi_t(x),\emptyset)=diam(X)$ for every $t\in\mathbb{R}$.
Then, $d(\phi_t(x),\phi_{s(t)}(y))\leq\delta'diam(X)$ for every $t\in\mathbb{R}$.
Taking $a=\phi_t(x)$ and $b=\phi_{s(t)}(y)$ we get
$d(\phi_t(x),\phi_{s(t)}(y))\leq \delta dist(\phi_t(x),Sing(\phi))$ for all $t\in\mathbb{R}$.
and so
$\phi_{s(t_0)}(y)\in \phi_{[t_0-\epsilon,t_0+\epsilon]}(x)$ for some $t_0\in\mathbb{R}$ proving the assertion.

On the other hand, since $\Lambda$ is nonsingular, $Sing(\phi|_{\Lambda})=\emptyset$ and so $Sing(\phi|_{\Lambda})$ is open.
Then,  $\phi|_{\Lambda}$ is expansive by Lemma \ref{thB}
proving the result.
\end{proof}

This corollary motivates the question if conversely
every flow
which is expansive on every nonsingular compact invariant set
is singular-expansive. But the answer is negative by the following example.

\begin{ex}
\label{gabi}
There is a compact metric space exhibiting a flow which not singular-expansive but expansive on every nonsingular compact invariant set.
\end{ex}

\begin{proof}
Following the ideas of Example \ref{insure} we
define
$$
X=\{(0,0)\}\cup \bigcup_{n\in\mathbb{N}} \{z\in \mathbb{R}^2:\|z\|=n^{-1}\}.
$$
Again $X$ is a compact metric space if equipped with the Euclidean metric. Once more we consider the flow
$\phi$ on $X$ obtained by restricting that of the vector field in $\mathbb {R}^2$ defined by $V(x,y)=(-y,x)$ on $X$.
As in Example \ref{insure} we have that
$$
\phi_t(z)=(-y\sin t+x\cos t, x\sin t+y\cos y),\quad\quad \forall z=(x,y)\in X, \quad t\in \mathbb{R}.
$$
Then, $Sing(\phi)=\{(0,0)\}$ and $\phi_t$ is a linear isometry for every $t\in\mathbb{R}$.
Now take $\epsilon=1$. Define the sequences $z_n,z_n'\in X$ by
$z_n=(\frac{1}n,0)$ and $z_n'=(\frac{1}{n+1},0)$ for $n\in\mathbb{N}$.
Notice that $z_n$ and $z_n'$ belong to different circles of $X$ and so their orbits are different.
From this we obtain $z_n'\notin \phi_{[-1,1]}(z_n)$.
On the other hand,
$$
d(\phi_t(z_n),\phi_t(z_n'))=\|z_n-z_n'\|=\frac{1}n-\frac{1}{n+1}=\frac{1}{n(n+1)}<\frac{1}{n^2},
$$
and $dist(\phi_t(z_n),Sing(\phi))=\|z_n-(0,0)\|=\frac{1}n$
so
$$
d(\phi_t(z_n),\phi_t(z_n'))<\frac{1}n dist(\phi_t(z_n),Sing(\phi)), \quad\quad\forall t\in \mathbb{R}.
$$
Since $z_n$ and $z_n'$ belong to different circles,
$\phi$ is not singular-expansive.
Finally, since every nonsingular compact invariant set consists of finitely many periodic orbits, one has that $\phi$ is expansive on all such sets. This completes the proof.
\end{proof}

The lemma below will be used to prove Theorem \ref{thank}.

\begin{lemma}
\label{bariloche}
Let $\phi$ be a flow of a compact metric space $X$.
If for every $\epsilon>0$ there is $\delta>0$ such that
$B[x,\delta dist(x,Sing(\phi))]\subset \phi_{[-\epsilon,\epsilon]}(x)$ for every $x\in X$,
then $\phi$ is both singular-expansive and singular-equicontinuous.
\end{lemma}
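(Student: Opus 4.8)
The plan is to verify the two conclusions separately, in each case directly from the hypothesis; call the hypothesized condition $(\star)$. Note first that $(\star)$ converts ``closeness relative to the distance to $Sing(\phi)$'' into ``closeness along a short orbit arc'': if $d(x,y)\leq\delta\,dist(x,Sing(\phi))$ then $y\in B[x,\delta\,dist(x,Sing(\phi))]\subset\phi_{[-\epsilon,\epsilon]}(x)$, so $y=\phi_\tau(x)$ for some $|\tau|\leq\epsilon$.

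\textbf{Singular-expansivity.} This is immediate and in fact yields the stronger property $(*)$ that appears in the proof of Theorem~\ref{co1}. Given $\epsilon>0$, let $\delta>0$ be furnished by $(\star)$. If $x,y\in X$ and an increasing homeomorphism $s:\mathbb{R}\to\mathbb{R}$ satisfy $d(\phi_t(x),\phi_{s(t)}(y))\leq\delta\,dist(\phi_t(x),Sing(\phi))$ for every $t\in\mathbb{R}$, then evaluating at $t=0$ gives $\phi_{s(0)}(y)\in B[x,\delta\,dist(x,Sing(\phi))]\subset\phi_{[-\epsilon,\epsilon]}(x)$, so the defining condition of singular-expansivity holds with $t_0=0$.

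\textbf{Singular-equicontinuity.} Here I would first use compactness to get a uniform near-identity estimate. Since $X$ is compact and $\phi$ is continuous, the map $(s,z)\mapsto d(\phi_s(z),z)$ is uniformly continuous on $[-1,1]\times X$ and vanishes identically on $\{0\}\times X$; hence, given $\epsilon>0$, there is $\epsilon'\in(0,1)$ such that $d(\phi_s(z),z)\leq\epsilon$ for all $z\in X$ and all $s$ with $|s|\leq\epsilon'$. Let $\delta>0$ be given by $(\star)$ for this $\epsilon'$, and suppose $x,y\in X$ satisfy $d(x,y)\leq\delta\,dist(x,Sing(\phi))$. By the observation above, $y=\phi_\tau(x)$ for some $|\tau|\leq\epsilon'$, and therefore
$$
d(\phi_t(x),\phi_t(y))=d\big(\phi_t(x),\phi_\tau(\phi_t(x))\big)\leq\epsilon\qquad\text{for all }t\in\mathbb{R},
$$
by applying the choice of $\epsilon'$ to the point $z=\phi_t(x)$. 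This is exactly singular-equicontinuity.

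The only point needing a comment is the degenerate case $x\in Sing(\phi)$: then $dist(x,Sing(\phi))=0$, so in both arguments the relevant hypothesis forces $y=x$ and the conclusions are trivial. I do not anticipate any real obstacle; the proof is an unwinding of definitions, the one genuine input being the uniform near-identity estimate for $\phi$ on the compact space $X$, which is standard.
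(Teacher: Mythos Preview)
Your proof is correct and follows essentially the same approach as the paper: evaluate the singular-expansivity condition at $t=0$, and for singular-equicontinuity use the uniform near-identity estimate to convert ``$y$ lies on a short orbit arc of $x$'' into the desired uniform bound. The only cosmetic difference is that the paper cites Bowen--Walters for that estimate while you derive it directly from compactness and uniform continuity.
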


\begin{proof}
Let $\epsilon>0$ and $\delta>0$ be given by the condition.
If $s: \mathbb{R}\to \mathbb{R}$ is an increasing homeomorphism
satisfying
$d(\phi_t(x),\phi_{s(t)}(y))\leq \delta dist(\phi_t(x),Sing(\phi))$ for all $t\in\mathbb{R}$, then $\phi_{s(0)}(y)\in B(x,\delta dist(x,Sing(\phi)))$ and so $\phi_{s(0)}(y)\in \phi_{[-\epsilon,\epsilon]}(x)$ by the condition.
Therefore, $\phi$ is singular-expansive.

To prove that $\phi$ is singular-equicontinuous, take $\epsilon>0$ and $\eta>0$ such that
if $y=\phi_s(x)$ with $|s|\leq\eta$, then $d(\phi_t(x),\phi_t(y))\leq \epsilon$ for every $t\in\mathbb{R}$
(c.f. p. 181 in \cite{bw}).
For this $\eta$ we take $\delta>0$ given by the condition.
Therefore, if $d(x,y)\leq \delta dist(x,Sing(\phi))$, that is
$y\in B[x,\delta dist(x,Sing(\phi))]$, then
$y=\phi_s(x)$ for some $|s|\leq \eta$ thus
$d(\phi_t(x),\phi_t(y))\leq \epsilon$ for every $t\in\mathbb{R}$ proving the result.
\end{proof}

\begin{proof}[Proof of Theorem \ref{thAA}]
Let $\phi$ be a singular-expansive flow of a compact metric space $X$.
Given $\delta>0$ we denote by $U_\delta(Sing(\phi))$ the open $\delta$-ball around $Sing(\phi)$. Define

\begin{equation}
\label{bayly}
X_\delta=\bigcap_{t\in\mathbb{R}}\phi_t(X\setminus U_\delta(Sing(\phi))).
\end{equation}

It follows that $X_\delta$ is a compact invariant set without singularities of $\phi$.
Since $\phi$ is singular-expansive, $\phi$ is expansive on $X_\delta$ by Lemma \ref{puestico}. On the other hand, as is well known \cite{bw}, the set of periodic orbits of an expansive flow is countable.
Since the periodic orbits of $\phi$ are contained in $\bigcup_{n\in\mathbb{N}}X_{\frac{1}n}$, we conclude that the set of periodic orbits of $X$ is countable. This proves Item (1).

To prove Item (2) we see that if $Sing(\phi)=\emptyset$ or consists of finitely many isolated points of $X$, then $Sing(\phi)$ is open. Therefore, $\phi$ is expansive on $X\setminus Sing(\phi)$ by Item (2) of Lemma \ref{thB}.
Since $X\setminus Sing(\phi)$ and $Sing(\phi)$ are closed disjoint, we conclude that $\phi$ is expansive. This proves Item (2).

To prove Item (3) we further assume that $Sing(\phi)$ is isolated. Fix $t>0$ and suppose by contradiction that
$\phi$ has infinitely many distinct periodic orbits $O_n$
with period $t_n\leq t$.
If $inf_{n\in\mathbb{N}}dist(O_n,Sing(\phi))>0$, then
$\bigcup_{n\in\mathbb{N}}O_n\subset X_\delta$ for some $\delta>0$ which is a contradiction since $\phi$ is expansive on $X_\delta$ (see \cite{bw}).
Then, we can assume that there is a sequence $x_n\in O_n$ and $\sigma\in Sing(\phi)$ such that $x_n\to \sigma$.
Since $\sigma\in Sing(\phi)$ and the period of $O_n$ is bounded by $t$, we have that the whole $O_n\to \sigma$ with respect to the Hausdorff metric of compact subsets of $X$.
In particular, $diam(O_n)\to 0$ contradicting Lemma \ref{cons}.
This completes the proof.
\end{proof}

\begin{proof}[Proof of Theorem \ref{thank}]
Let $X=[0,1]$ be the unit interval endowed with the Euclidean metric.
For every $\lambda\in\mathbb{R}$ we define
$\phi: \mathbb{R}\times X\to X$ by
$$
\phi_t(x)=\frac{x e^{\lambda t}}{1+x(e^{\lambda t}-1)},
\quad\quad\forall 0\leq x\leq 1, t\in\mathbb{R}.
$$
It is not difficult to see that $\phi$ is a flow of $X$.
If $\lambda=0$, then $\phi$ is the trivial flow.
If $\lambda\neq 0$, then
$Sing(\phi)=\{0,1\}$ and the remainder orbits go from $0$ to $1$ or viceversa depending on whether
$\lambda>0$ or $\lambda<0$. Then, $\phi$ is Morse-Smale and so it has the shadowing property but is not equicontinuous.
We shall prove that if $\lambda=1$, then $\phi$ satisfies the condition in Lemma \ref{bariloche}.

First note that
$dist(z,Sing(\phi))=\min\{z,1-z\}
$ for every $z\in X.$
It follows that $dist(z,Sing(\phi))=z$ (if $z\leq \frac{1}2$) or $1-z$ (otherwise).

Take $\epsilon>0$ and $0<\delta<\frac{1}2$ such that
$$
\ln\left(\frac{1+\delta}{1-\delta}\right)
\leq \epsilon.
$$
We will show that
$$
|x-y|\leq \delta\min\{x,1-x\} \quad \Rightarrow \quad
y\in \phi_{[-\epsilon,\epsilon]}(x).
$$
Notice that the left-hand side of the above implication is equivalent to
$$
x-\delta\min\{x,1-x\}\leq y\leq x+\delta\min\{x,1-x\}.
$$
Since $\delta<\frac{1}2$, one has $y\in X\setminus Sing(\phi)$.

Assume $x\leq \frac{1}2$.
It follows that
$$
(1-\delta)\leq \frac{y}x\leq (1+\delta)
\quad \mbox{ and } \quad
\frac{1-x}{1-x(1-\delta)}\leq \frac{1-x}{1-y}\leq \frac{1-x}{1-x(1+\delta)}.
$$
Since $0<x\leq \frac{1}2$, $0<\frac{x}{1-x}\leq1$ and then
$$
\frac{1-x}{1-x(1-\delta)}=\frac{(1-x)}{(1-x)+x\delta}=\frac{1}{1+\left(\frac{x}{1-x}\right)\delta}\geq \frac{1}{1+\delta}.
$$
Likewise,
$$
\frac{1-x}{1-x(1+\delta)}\leq \frac{1}{1-\delta}
$$
so
$$
\frac{1}{1+\delta}\leq \frac{1-x}{1-y}\leq \frac{1}{1-\delta}
$$
thus
$$
\ln\left(
\frac{1-\delta}{1+\delta}\right)\leq
\ln\left(\frac{y}x\cdot \frac{1-x}{1-y}\right)\leq
\ln\left(\frac{1+\delta}{1-\delta}\right).
$$
On the other hand, it follows from the definition of $\phi$ that
the equation $\phi_t(x)=y$
is solved by
$$
t=\ln\left(\frac{y}x\cdot \frac{1-x}{1-y}\right).
$$
Then, the choice of $\delta$ implies
$$
-\epsilon\leq t\leq \epsilon
$$ yielding
$y\in \phi_{[-\epsilon,\epsilon]}(x)$ for $x\leq \frac{1}2$.
Interchanging the roles of $x$ and $y$ above by $1-x$ and $1-y$ respectively we get $y\in \phi_{[-\epsilon,\epsilon]}(x)$ when $x>\frac{1}2$ too.
Therefore, $\phi$ satisfies the condition in Lemma \ref{bariloche},
and so, it is both singular-expansive and singular-equicontinuous.
This completes the proof.
\end{proof}

\begin{proof}[Proof of Theorem \ref{gasolina}]
First suppose that
$\Omega(\phi)\setminus Sing(\phi)$ is closed.
Then, $K=\Omega(\phi)\setminus Sing(\phi)$ is compact
contained in $X\setminus Sing(\phi)$ hence
$h(\phi,K)\leq h^*(\phi)$.
On the other hand, by well-known properties of the topological entropy (p. 403 in \cite{b1}),
$h(\phi)=h(\phi,X)=h(\phi,K\cup Sing(\phi))\leq \max\{h(\phi,K), h(\phi,Sing(\phi))\}=h(\phi,K)\leq h^*(\phi)$.
Since $h^*(\phi)\leq h(\phi)$, we are done.

Now suppose that $\phi$ is singular-equicontinuous.
Take $K\subset X\setminus Sing(\phi)$ compact and $\epsilon>0$.
For this $\epsilon$ let $\delta$ be given by the singular-equicontinuity of $\phi$.
Since $K\cap Sing(\phi)=\emptyset$,
$\delta dist(x, Sing(\phi))>0$ for every $x\in K$.
Then, since $K$ is compact,
there is $F\subset K$ finite such that
$$
K\subset \bigcup_{x\in F}B[x,\delta dist(x,Sing(\phi))].
$$
Now take $y\in K$ and $t>0$.
Then,
$$
d(x,y)\leq \delta dist(x,Sing(\phi))
$$
for some $x\in F$,
and so, by singular-equicontinuity,
$$d(\phi_l(x),\phi_l(y))\leq \epsilon\quad\quad\forall 0\leq l\leq t.
$$
It follows that $F$ is $(t,\epsilon)$-spanning for every $t>0$
thus
$r(t,\epsilon)\leq car(F)$ the cardinality of $F$ for every
$t>0$.
Therefore,
$$
\limsup_{t\to\infty}\frac{1}t\ln r(t,\epsilon)\leq \limsup_{t\to\infty} \frac{\ln car(F)}t=0, \quad\quad\forall \epsilon>0.
$$
Then, $h(\phi,K)=0$ for every compact subset $K\subset X\setminus Sing(\phi)$ so $h^*(\phi)=0$.
\end{proof}

\end{document}